\newtheorem{defi}{Definition}[section]
\newtheorem{thm}[defi]{Theorem}
\newtheorem{lemma}[defi]{Lemma}
\newtheorem{prop}[defi]{Proposition}
\newtheorem{cor}[defi]{Corollary}
\newtheorem{example}[defi]{Example}
\newtheorem{remark}[defi]{Remark}
\begin{document}

\thanks{The first author was supported by Austrian Science Fund project FWF P24028-N18.}
\keywords{Sierpi\'{n}ski carpet graph, metric compactification, metric boundary, geodesic ray, Busemann point, obstruction.}

\title{Metric compactification of infinite Sierpi\'{n}ski carpet graphs}

\author{Daniele D'Angeli}
\address{Institut f\"{u}r mathematische Strukturtheorie (Math C)\\
Technische Universit\"{a}t Graz \ \ Steyrergasse 30, 8010 Graz, Austria}
\email{dangeli@math.tugraz.at}
\author{Alfredo Donno}
\address{Universit\`{a} degli Studi Niccol\`{o} Cusano - Via Don Carlo Gnocchi, 3 00166 Roma, Italia \\
Tel.: +39 06 45678356, Fax: +39 06 45678379}
\email{alfredo.donno@unicusano.it}

\maketitle

\begin{abstract}
We associate, with every infinite word over a finite alphabet, an increasing sequence of rooted finite graphs, which provide a discrete approximation of the famous Sierpi\'{n}ski carpet fractal. Each of these sequences converges, in the Gromov-Hausdorff topology, to an infinite rooted graph. We give an explicit description of the metric compactification of each of these limit graphs. In particular, we are able to classify Busemann and non-Busemann points of the metric boundary. It turns out that, with respect to the uniform Bernoulli measure on the set of words indexing the graphs, for almost all the infinite graphs, the boundary consists of four Busemann points and countably many non-Busemann points.
\end{abstract}

\begin{center}
{\footnotesize{\bf Mathematics Subject Classification (2010)}: 05C10, 05C63, 05C60, 54D35.}
\end{center}

\section{Introduction}
The Sierpi\'{n}ski carpet fractal was introduced by
W. Sierpi\'{n}ski in 1916 \cite{courbe}, and it can be considered a generalization of the Cantor set in dimension $2$. Like the
well-known Sierpi\'{n}ski gasket, the carpet has a self-similar
structure: roughly speaking, this means that it is composed of $8$ smaller copies, with a scaling factor $3$,
that look exactly the same as it. The Sierpi\'{n}ski gasket is a  finitely ramified fractal (that is, it
can be disconnected by removing a finite number of points), whereas the carpet is an infinitely ramified fractal. Both the structures have been largely studied in the literature,
from different points of view. In particular, the study of critical
phenomena and physical models - the Ising model \cite{bonnier, ising, tutte2, gefen3, solvable}, the dimer model
\cite{dimeri}, the percolation model \cite{percolation} - on the Sierpi\'{n}ski carpet and on the
Sierpi\'{n}ski gasket has been the focus of several works in the
last decades.

In \cite{Rodi}, we have introduced an uncountable family of infinite rooted graphs, obtained as limit (in the Gromov-Hausdorff topology) of increasing sequences of rooted finite graphs, indexed by infinite words over a
finite alphabet. Such sequences represent a finite discrete approximation of the classical Sierpi\'{n}ski
carpet. For every word $w\in Y\times X^\infty$, with $Y=\{a,b,c,d\}$ and $X=\{0,1,2,3,4,5,6,7\}$, the infinite limit graph associated with $w$ is denoted $\Gamma_w$. We have studied in \cite{Rodi} the isomorphism properties of these
limit graphs, regarded as unrooted graphs, proving that there
exist uncountably many classes of isomorphism. The construction of the graphs $\{\Gamma_w\}_{w\in Y\times X^\infty}$ is recalled in Section \ref{sectionrodi}.\\ \indent
The aim of the present paper is to study the metric compactification of the graphs $\{\Gamma_w\}_{w\in Y\times X^\infty}$.
More precisely, our graphs are locally finite connected graphs, with a countable vertex set. This ensures that, when endowed with the standard geodesic distance, they are proper, complete, and locally compact metric spaces. These properties allow to apply to our graphs the theory of metric compactification developed in \cite{rieffel} and whose basic ideas are recalled in Section \ref{sectionrieffel}. For each of our graphs, the compactification is a space within which the vertex set, regarded as a metric space, embeds as an open and dense subspace. The points of the metric boundary are then defined as equivalence classes of horofunctions, where two horofunctions are equivalent if they differ by a constant. \\ \indent We will make large use of the characterization of the boundary points as limits of weakly-geodesic rays. A point of the boundary is said to be a Busemann point if it is the limit of almost-geodesic rays, which represent a special class of weakly-geodesic rays. In our computations, the base point of the horofunctions is represented by the root of the infinite graph. Moreover, the self-similar structure of the graphs allows us to give a complete description of the horofunctions, and so an explicit description of the metric boundary (Section \ref{sectioncompactification}). \\ \indent The study of horofunctions is a classical topic in the setting of $C^\ast$-algebras
and Cayley graphs of groups, in particular in connection with the investigation of the Cayley
compactification and the boundary of a group \cite{devin, rieffel}. We want to mention here the paper \cite{webster}, where the study of Busemann points is applied to the context of the metric boundary of Cayley graphs. See also the recent paper \cite{horodiestel}, where the authors study the
 horofunction boundary of the Lamplighter group, with respect to the word metric obtained from the generating set arising from viewing the
 Lamplighter group as a group generated by a finite automaton. Observe that the present paper follows the paper \cite{io}, where the same problems of isomorphism and horofunction classification are studied for a sequence of graphs approximating the Sierpi\'{n}ski gasket. On the other hand, the fact that the Sierpi\'{n}ski carpet is not finitely ramified makes much harder our study for the carpet graphs $\{\Gamma_w\}_{w\in Y\times X^\infty}$ than in the gasket case. It is worth mentioning that the isomorphism
problem has been studied also in \cite{JMD}, where the limits (in the Gromov-Hausdorff topology) of the sequences of rooted Schreier
graphs, associated with the action of the self-similar
Basilica group on the rooted binary tree, have been investigated.\\ \indent Our main result is given in Theorem \ref{teoremone}, where we provide an explicit description of the metric boundary of each graph $\Gamma_w$, for every $w\in Y\times X^\infty$; in particular, we are able to distinguish between Busemann and non-Busemann points. It also follows (see Corollary \ref{corosectioncomp}) that there exist uncountably many non-isomorphic graphs whose boundaries are isomorphic, and that the metric boundary always contains countably many non-Busemann points. Finally, we endow the set $Y\times X^\infty$ with the uniform Bernoulli measure, and we show (see Corollary \ref{measurethm}) that, with probability $1$, the boundary $\partial \Gamma_w$ consists of four Busemann points and countably many non-Busemann points.


\section{Metric compactification}\label{sectionrieffel}

In this section we recall the basic definition of metric compactification of a metric space $(X,d)$. We will mainly refer to the theory developed by Rieffel in
\cite{rieffel}.

Let $(X, d)$ be a metric space, and let $C_b(X)$ be the commutative algebra of continuous
bounded functions on $X$, with respect to the supremum norm. Let us fix a base point $z_0 \in X$. For each $y\in X$, the function $\varphi_y$ on $X$ is defined by
$$
\varphi_y(x) = d(x, z_0) - d(x, y), \qquad \forall x \in X.
$$
It is easy to check that $\varphi_y\in C_b(X)$. Let $H_d$ be the linear span in $C_b(X)$ of the set $\{\varphi_y: y\in X\}$: observe that $H_d$ does not depend on the particular choice of the base point. It can be easily seen that $\|\varphi_y-\varphi_z\|_\infty = d(y,z)$, so that the map $y\mapsto \varphi_y$ is an isometry from $(X, d)$ into $C_b(X)$. As the second space is complete, this isometry naturally extends to the completion of $X$.

If $(X,d)$ is a locally compact complete metric space, then it is possible to construct a compactification of $X$ within which $X$ is open and dense, to which each function $\varphi_y$ extends as a continuous function. This is the maximal ideal space $\overline{X}^d$ of the norm-closed subalgebra $\mathcal{G}(X,d)$ of $C_b(X)$ generated by the closed subalgebra $C_\infty(X)$ of functions vanishing at infinity, by the constant functions, and by $H_d$.

\begin{defi}
The space $\overline{X}^d$ is the metric compactification of $X$. The metric boundary of $X$ is the set $\overline{X}^d\setminus X$ and it is denoted by $\partial_d X$.
\end{defi}

The construction above is strictly related to the construction developed by Gromov in \cite{gromov}. Let $(X, d)$ be a geodesic locally compact complete metric space, and let $C(X)$ denote the vector space
of all continuous functions on $X$, endowed with the topology of uniform convergence on the compact subsets of $X$.
Let $C_\ast(X)$ denote the quotient of $C(X)$ modulo the subspace of constant
functions and, for every $f\in C(X)$, let $\overline{f}$ denote its image
in $C_\ast(X)$.

For each $y \in X$, we put $\psi_y(x) = d(x, y)$. This defines an embedding $\iota$ of $X$ into $C_\ast(X)$. Let $cl(X)$ be the closure of $\iota(X)$ in $C_\ast(X)$. Then it can be seen that $cl(X)$ is compact, and that $\iota(X)$ is open
in $cl(X)$, so that $cl(X)\setminus X$ is a boundary at infinity for $X$ (see, for instance, \cite[Chapter II.1]{quattro} or \cite[Part II.8]{sei}).

It is possible to show that there exists a homeomorphism between $\partial_d X$ and $cl(X)\setminus X$, defined by the map $u \mapsto \overline{g}_u$, with $g_u(x) = \lim_i (d(y_i,x)-d(y_i,z_0))$, where $z_0\in X$ is a fixed base point and $\{y_i\}_{i\in I}$ is a net of elements of $X$ converging to $u$.
\begin{defi}
For every $u\in \partial_d X$, the function $g_u$ is said the horofunction defined by $u$.
\end{defi}

The following definitions establish an explicit relationship between the metric boundary $\partial_d X$ and geodesic rays, or generalized geodesic rays, in $X$.

\begin{defi}\cite{rieffel}\label{defigeodesic}
Let $(X, d)$ be a metric space, and let $T$ be an unbounded
subset of $\mathbb{R}^+$ containing $0$. Let $\gamma: T \to X$. Then
\begin{enumerate}
\item $\gamma$ is a geodesic ray if $d(\gamma(t), \gamma(s)) = |t - s|$, for all $t, s \in T$;
\item $\gamma$ is an almost-geodesic ray if, for every $\varepsilon > 0$, there exists an integer $N$ such that, for every $t, s \in T$
with $t \geq s \geq N$, one has:
$$
|d(\gamma(t), \gamma(s)) + d(\gamma(s), \gamma(0)) - t| < \varepsilon;
$$
\item $\gamma$ is a weakly-geodesic ray if, for every $y \in X$ and every $\varepsilon > 0$,
there exists an integer $N$ such that, for every $s, t \geq N$, one has:
$$
|d(\gamma(t), \gamma(0)) - t| < \varepsilon \qquad \text{and} \qquad |d(\gamma(t), y) - d(\gamma(s), y) - (t - s)| < \varepsilon.
$$
\end{enumerate}
\end{defi}
Observe that any geodesic ray is an almost-geodesic ray; moreover, any almost-geodesic ray is a weakly-geodesic ray. The following crucial theorem holds.

\begin{thm}\cite{rieffel}
Let $(X, d)$ be a locally compact complete metric space,
and let $\gamma$ be a weakly-geodesic ray in $X$. Then the limit $\lim_{t\to +\infty} f(\gamma(t))$ exists
for every $f \in \mathcal{G}(X, d)$, and it defines an element of $\partial_d X$. Conversely, if $(X,d)$
is proper and if its topology has a countable base, then every
point of $\partial_d X$ is determined by a weakly-geodesic ray.
\end{thm}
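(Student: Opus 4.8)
The plan is to prove the two implications separately; the forward one is mostly soft functional analysis. Fix a weakly-geodesic ray $\gamma\colon T\to X$ and let $\mathcal{A}$ be the set of $f\in\mathcal{G}(X,d)$ for which $\lim_{t\to+\infty}f(\gamma(t))$ exists. First I would check that $\mathcal{A}$ is a norm-closed unital subalgebra of $\mathcal{G}(X,d)$: closure under uniform limits follows from the estimate $|\ell_n-\ell_m|\le\limsup_t|f_n(\gamma(t))-f_m(\gamma(t))|\le\|f_n-f_m\|_\infty$ for the limiting values $\ell_n$, which are therefore Cauchy. Since $\mathcal{G}(X,d)$ is, by construction, the norm-closed subalgebra generated by $C_\infty(X)$, the constants and $H_d$, it suffices to place each of these generators in $\mathcal{A}$. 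Constants are immediate. For $f\in C_\infty(X)$, the first defining inequality of a weakly-geodesic ray forces $d(\gamma(t),\gamma(0))\to+\infty$, so $\gamma(t)$ eventually leaves every bounded (hence every compact) subset of $X$ and $f(\gamma(t))\to 0$. For $f=\varphi_y\in H_d$, I would write $\varphi_y(\gamma(t))=\bigl(d(\gamma(t),z_0)-t\bigr)-\bigl(d(\gamma(t),y)-t\bigr)$ and use the second defining inequality, applied with the point $y$ and with the point $z_0$, to show that each bracketed term is bounded and satisfies a Cauchy condition as $t\to+\infty$; hence both converge and so does $\varphi_y(\gamma(t))$. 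This yields $\mathcal{A}=\mathcal{G}(X,d)$, so $\chi\colon f\mapsto\lim_t f(\gamma(t))$ is a well-defined bounded unital homomorphism, i.e.\ a character of $\mathcal{G}(X,d)$, hence a point of $\overline{X}^d$. Finally $\chi\notin X$: by local compactness there is, for any $x_0\in X$, a function in $C_\infty(X)$ not vanishing at $x_0$, whereas $\chi$ annihilates all of $C_\infty(X)$; thus $\chi$ is not an evaluation and $\chi\in\partial_d X$.

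For the converse, fix $u\in\partial_d X$ and pass to the model $cl(X)\setminus X$ via the homeomorphism recalled above. Since $X$ is proper it is $\sigma$-compact (in particular second countable, so the extra hypothesis on the topology is automatic here), so $C(X)$ with the compact-open topology is metrizable, hence so is $cl(X)$; therefore $u$ is the limit of a \emph{sequence} $\iota(y_i)$ with $y_i\in X$. Normalizing, the $1$-Lipschitz functions $h_i(x):=d(x,y_i)-d(y_i,z_0)$ converge uniformly on compacta to the horofunction $g_u$, which vanishes at $z_0$. I would first note that $d(y_i,z_0)\to+\infty$: otherwise properness would give a subsequence with $y_{i_k}\to x_0\in X$, forcing $u=\iota(x_0)\in X$. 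Passing to a subsequence one may assume $t_i:=d(y_i,z_0)$ is strictly increasing; set $T:=\{0\}\cup\{t_i:i\ge1\}$, $\gamma(0):=z_0$, $\gamma(t_i):=y_i$. Then $\gamma$ is a weakly-geodesic ray: $d(\gamma(t_i),z_0)=t_i$ gives the first condition exactly, and for each fixed $y$ one has $d(\gamma(t_i),y)-d(\gamma(t_j),y)-(t_i-t_j)=h_i(y)-h_j(y)\to 0$, giving the second. By the forward direction $\gamma$ determines a boundary point whose horofunction is $x\mapsto\lim_i h_i(x)=g_u(x)$, and under the correspondence $u\mapsto\overline{g}_u$ this boundary point is exactly $u$.

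The step I expect to be the main obstacle is the first-countability reduction in the converse: one must extract an honest sequence $(y_i)$ (not merely a net) from the boundary point, and see that the normalization by $d(y_i,z_0)$ is the correct one, so that the limiting horofunction is finite-valued and the $y_i$ escape every compact set. Once that is in place, verifying the two inequalities in the definition of a weakly-geodesic ray, and matching the resulting horofunction with $g_u$, is direct if slightly bookkeeping-heavy; the forward implication, by contrast, is almost formal once the asymptotics of $\varphi_y$ and of $C_\infty$-functions along $\gamma$ are understood.
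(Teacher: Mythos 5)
The paper does not prove this statement at all: it is quoted verbatim from Rieffel (Lemma~4.5 and Theorem~4.7 of \cite{rieffel}), so there is no in-paper argument to compare against. Your proof is correct and is essentially Rieffel's own: the forward direction by checking the limit on the three families of generators of $\mathcal{G}(X,d)$ and passing to the closed subalgebra they generate, the converse by using metrizability of the compactification to extract a sequence $y_i\to u$ and reparametrizing it by $t_i=d(y_i,z_0)$ to get a weakly-geodesic ray with horofunction $g_u$.
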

The previous theorem leads to the following fundamental definition.
\begin{defi}
A point of $\partial_d X$ defined by an almost-geodesic
ray $\gamma$ is called a Busemann point of $\partial_d X$.
\end{defi}

It is quite interesting in general to establish if the metric boundary $\partial_d X$ of a metric space $(X,d)$ contains non-Busemann points. We will develop such investigation in Section \ref{sectioncompactification}, for an uncountable family of metric spaces given by the infinite Sierpi\'{n}ski carpet graphs defined in Section \ref{sectionrodi}. In fact, the construction described above can be applied to graphs $G=(V,E)$ satisfying some natural conditions, namely of being locally finite connected graph, with a countable vertex set. Recall that $V$ is a metric space when it is endowed with the standard geodesic distance $d$, where for every $x,y\in V$ the distance $d(x,y)$ is defined as the length of a minimal path from $x$ to $y$. Such a metric induces the discrete topology on $V$, so that every function on $V$ is continuous, and $(V,d)$ is a proper, complete and locally compact space. A fundamental property which holds in the graph setting is that, if $u\in \partial_d V$ is a Busemann point, that is, it is the limit of an almost-geodesic ray $\gamma$, then there exists a geodesic ray $\gamma'$ converging to $u$ \cite{webster}. As a consequence, this fact ensures that, in order to show that a point of $\partial_d V$ is not a Busemann point, it is sufficient to show that it is not the limit of any geodesic ray (this characterization will be used in the proof of the Proposition \ref{propnonbusemann}).


\section{Carpet graphs} \label{sectionrodi}
Let us start by fixing two finite alphabets $X=\{0,1,\ldots, 7\}$ and $Y=\{a,b,c,d\}$. For each $n\geq 1$, let $X^n=\{x_1x_2\ldots x_n : x_i\in X\}$ be the set of words of length $n$ over the alphabet $X$, and let $X^\infty = \{x_1x_2\ldots\ldots : x_i \in X\}$ be the set of infinite words over the alphabet $X$. The set $Y\times X^\infty$ can be equipped with the direct product topology. A basis of open sets is the collection of all cylindrical sets of type $C_{yu} = \{yuX^\infty : y\in Y, u \in X^n, \text{ for some }n\}$. The space $Y\times X^\infty$ is totally disconnected and homeomorphic to the Cantor set. The cylindrical sets generate a $\sigma$-algebra of Borel subsets of $Y\times X^\infty$. We will denote by $m$ the uniform Bernoulli measure on $Y\times X^\infty$.

Let $C_4$ denote the cyclic graph of length $4$, whose vertices will be denoted by $a, b, c, d$. We choose an embedding of this graph into the plane in such a way that $a$ is the left vertex of the bottom edge, and $b,c,d$ correspond to the other vertices by following the anticlockwise order (Fig. \ref{fig1}).\\
\indent Take an infinite word $w=yx_1x_2\ldots \in Y\times X^\infty$. We denote by
$w_n$ the prefix $yx_1\ldots x_{n-1}$ of length $n$ of $w$.

\begin{defi}
The infinite Sierpi\'{n}ski carpet graph $\Gamma_{w}$ is the rooted
graph inductively constructed as follows:
\begin{description}
  \item[Step $1$] The graph $\Gamma_w^1$ is the cyclic graph $C_4$ rooted at the vertex $y$.
  \item[Step $n\to n+1$] Take $8$ copies of $\Gamma_{w}^n$ and glue them
  together on the model graph $\overline{\Gamma}$, in such a way that these copies occupy the positions indexed by $0,1,\ldots, 7$ in $\overline{\Gamma}$ (Fig. \ref{fig1}). Note that each copy shares at most one (extremal) side with
  any other copy. As a root for the new rooted graph $\Gamma_w^{n+1}$, we choose the root of the copy of $\Gamma_w^n$ occupying the position indexed by the letter $x_n$. We identify the root of $\Gamma_w^{n+1}$ with the finite word $w_{n+1}=yx_1\ldots x_n$.
  \item[Limit] $\Gamma_{w}$ is the infinite rooted graph obtained as the limit of the sequence of finite rooted graphs
  $\{\Gamma_{w}^n\}_{n\geq 1}$, whose root is naturally identified with the infinite word $w$.
\end{description}
\end{defi}
The limit in the previous definition means that, for each $r>0$, there exists $n_0\in \mathbb{N}$ such that the ball $B_{\Gamma_w}(w,r)$ of radius $r$ rooted at $w$ in $\Gamma_w$ is isomorphic to the ball $B_{\Gamma_w^n}(w_n,r)$ of radius $r$ rooted at $w_n$ in $\Gamma_{w}^n$, for every $n\geq n_0$ (Gromov-Hausdorff topology).

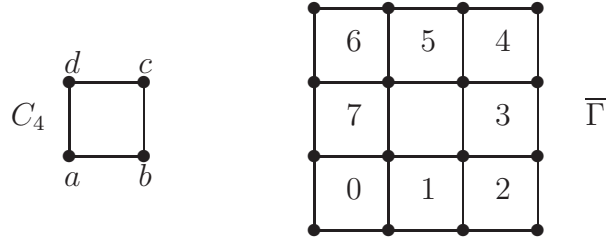
\begin{figure}
\begin{center}
\begin{picture}(250,100)\unitlength=0,14mm
\letvertex a=(30,70)\letvertex b=(100,70)\letvertex c=(100,140)\letvertex d=(30,140)

\letvertex e=(260,0)\letvertex f=(330,0)\letvertex g=(400,0) \letvertex h=(470,0)\letvertex i=(470,70)
\letvertex l=(470,140)\letvertex o=(330,210)\letvertex n=(400,210) \letvertex m=(470,210)
\letvertex p=(260,210)\letvertex q=(260,140)\letvertex r=(260,70)
\letvertex s=(330,70)\letvertex v=(330,140)\letvertex t=(400,70)\letvertex u=(400,140)
 \put(-25,100){$C_4$}   \put(515,100){$\overline{\Gamma}$}

\put(25,42){$a$} \put(25,149){$d$}\put(95,42){$b$}\put(95,149){$c$}

\drawvertex(a){$\bullet$}
\drawvertex(b){$\bullet$}\drawvertex(c){$\bullet$}\drawvertex(d){$\bullet$}

\drawvertex(e){$\bullet$}
\drawvertex(f){$\bullet$}\drawvertex(g){$\bullet$}\drawvertex(h){$\bullet$}\drawvertex(i){$\bullet$}
\drawvertex(l){$\bullet$}\drawvertex(m){$\bullet$}\drawvertex(n){$\bullet$}\drawvertex(o){$\bullet$}
\drawvertex(p){$\bullet$}\drawvertex(q){$\bullet$}\drawvertex(r){$\bullet$}\drawvertex(s){$\bullet$}
\drawvertex(t){$\bullet$}\drawvertex(u){$\bullet$}\drawvertex(v){$\bullet$}
\drawundirectededge(b,a){} \drawundirectededge(c,b){}
\drawundirectededge(a,d){} \drawundirectededge(c,d){}

\drawundirectededge(f,e){} \drawundirectededge(g,f){}
\drawundirectededge(h,g){} \drawundirectededge(i,h){}
\drawundirectededge(i,l){} \drawundirectededge(l,m){}
\drawundirectededge(n,o){} \drawundirectededge(m,n){}
\drawundirectededge(p,q){} \drawundirectededge(p,o){}
\drawundirectededge(q,r){} \drawundirectededge(r,e){}

\drawundirectededge(r,s){} \drawundirectededge(s,t){}\drawundirectededge(t,i){}
\drawundirectededge(f,s){}\drawundirectededge(s,v){} \drawundirectededge(v,o){}
\drawundirectededge(q,v){}\drawundirectededge(u,v){} \drawundirectededge(u,l){}
\drawundirectededge(g,t){}\drawundirectededge(u,t){} \drawundirectededge(u,n){}

\put(290,30){$0$}\put(290,100){$7$}\put(290,170){$6$}
\put(430,30){$2$}\put(430,100){$3$}\put(430,170){$4$}
\put(360,30){$1$}\put(360,170){$5$}
\end{picture}
\end{center}\caption{The cyclic graph $C_4$ and the model graph $\overline{\Gamma}$.}\label{fig1}
\end{figure}
Observe that, for all $v,w\in Y\times X^\infty$, the graph $\Gamma_v^n$ is isomorphic to $\Gamma_w^n$ as an unrooted graph. When we will refer to this unrooted graph, we will use the notation $\Gamma_n$. One can check that the number of vertices of $\Gamma_n$ is $\frac{11}{70}8^n+\frac{8}{15}3^n+\frac{8}{7}$, for every $n\geq 1$.

\begin{example}\rm
Let $w = a24\ldots \in Y\times X^\infty$. In Fig. \ref{figura} we have represented the finite graph $\Gamma^3_{w}$; its root is identified with the vertex $w_3=a24$.

\begin{figure}[h]
\begin{center}
\psfrag{a24}{$a24$}
\includegraphics[width=0.35\textwidth]{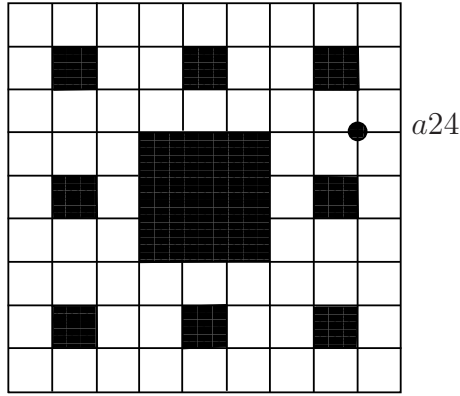}
\end{center}\caption{The rooted graph $\Gamma^3_{w}$, with $w=a24\ldots$.}\label{figura}
\end{figure}
 \end{example}

Note that, for each $n\geq 2$, the graph $\Gamma_n$ contains a central square, that we will call the \textit{hole} of level $n$, denoted by $H_n$, which does not contain any vertex of $\Gamma_n$ in its interior part. It is not difficult to check, by induction, that the number of the vertices of the boundary of $H_n$ is given by $4\cdot 3^{n-2}$, and that each side of the hole consists of exactly $3^{n-2}+1$ vertices. Moreover, due to the recursive construction of the graph $\Gamma_n$, one has that $\Gamma_n$ contains a hole of level $n$, as well as $8$ holes isomorphic to $H_{n-1}$, and more generally $8^k$ holes isomorphic to $H_{n-k}$, for every $ 2\leq k \leq n$. For each $n\geq 2$, we will denote by $s^n_1$ (resp. $s^n_3$, $s^n_5$, $s^n_7$) the top (resp. left, bottom, right) side of $H_n$. We also use the notation $A_n, B_n, C_n, D_n$ to denote the vertices of the hole $H_n$, ordered counterclockwise starting from the left vertex on the bottom side (see Fig. \ref{hole}).

\begin{figure}
\begin{center}
\psfrag{A}{$A_n$} \psfrag{B}{$B_n$} \psfrag{C}{$C_n$}     \psfrag{D}{$D_n$}
\psfrag{s1}{$s_1^n$} \psfrag{s3}{$s_3^n$}    \psfrag{s5}{$s_5^n$} \psfrag{s7}{$s_7^n$}
\includegraphics[width=0.22\textwidth]{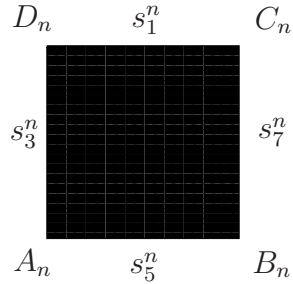}
\end{center} \caption{The hole $H_n$.}\label{hole}
\end{figure}

It is worth mentioning that two distinct finite words $v_n$ and $w_n$ may correspond to the same vertex of $\Gamma_n$, as shown in the following example.

\begin{example}  \rm
Let $u=a24\ldots$ and $w= d43\ldots$ be two infinite words in $Y\times X^\infty$. Then the graphs $\Gamma^3_{u}$ and $\Gamma^3_{w}$ are isomorphic as rooted graphs, even if the graphs $\Gamma^1_u$, $\Gamma^1_w$ and $\Gamma^2_{u}$, $\Gamma^2_{w}$ are not isomorphic as rooted graphs (see Fig. \ref{figurabis}).
\begin{figure}
\begin{center}
\psfrag{a}{$a$} \psfrag{a2}{$a2$} \psfrag{bis}{$a24\equiv d43$}
\psfrag{d}{$d$} \psfrag{d4}{$d4$}
\includegraphics[width=0.5\textwidth]{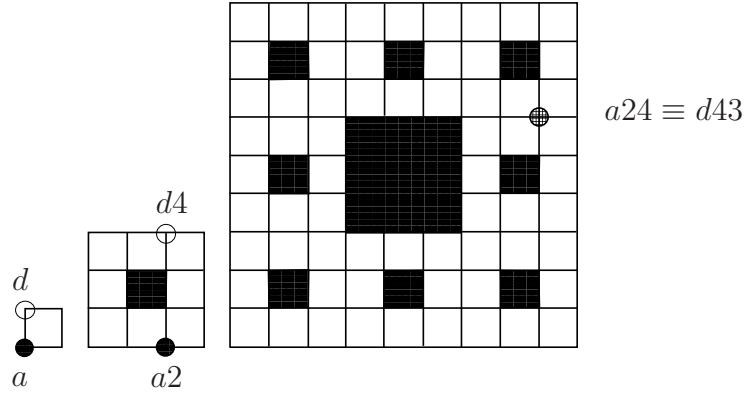}
\end{center} \caption{Construction of the rooted graphs $\Gamma^3_{u}$ and $\Gamma^3_{w}$, with $u=a24\ldots$ and $w=d43\ldots$.}\label{figurabis}
\end{figure}
\end{example}

We say that two infinite words $v,w\in Y\times X^\infty$ are \textit{cofinal} if they differ only for a finite number of letters. Cofinality is clearly an equivalence relation, that we will denote by $\sim$. Given $v,w\in Y\times X^\infty$, if there exists $n_0\in \mathbb{N}$ such that $v_n$ and $w_n$ correspond to the same vertex of the finite graph $\Gamma_n$ for every $n\geq n_0$, then it must be $v=v_{n_0}u$ and $w=w_{n_0}u$, for some $u\in X^{\infty}$; that is, $v$ and $w$ must be cofinal. On the other hand, it is not difficult to check that all the vertices belonging to the same infinite graph $\Gamma_w$, with $w\in Y\times X^\infty$, are cofinal with $w$. In \cite{Rodi} we gave the following results.

\begin{thm}\label{teoclass}
Let $v=yx_1x_2\ldots,w=y'x_1'x_2'\ldots\in Y\times X^\infty$ and let $G$ be the subgroup of $Sym(X)$ generated by the permutations $\{(1357),(04)(13)(57)\}$, isomorphic to the dihedral group of $8$ elements. Then the graphs
$\Gamma_v$ and $\Gamma_w$ are isomorphic, as unrooted graphs, if and only if there exists $\sigma\in G$ such that
$$
x_1'x_2'\ldots \sim \sigma(x_1x_2\ldots) :=\sigma(x_1) \sigma(x_2)\ldots.
$$
\end{thm}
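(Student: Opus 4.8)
The plan is to split the statement into two directions. For the "if" direction, I would first observe that the dihedral group $G = D_8$ acts naturally on the model graph $\overline{\Gamma}$: the generator $(1357)$ corresponds to a $90^\circ$ rotation of the square configuration in Fig. \ref{fig1}, while $(04)(13)(57)$ corresponds to a reflection. Crucially, since the eight small copies are glued along the model graph $\overline{\Gamma}$ in each recursive step, any symmetry $\sigma \in G$ of $\overline{\Gamma}$ lifts, via the self-similar construction, to a graph automorphism of each finite graph $\Gamma_n$ that permutes the eight level-$(n{-}1)$ copies according to $\sigma$ and acts recursively inside each copy. If $x_1' x_2' \ldots = \sigma(x_1) \sigma(x_2) \ldots$ exactly (not just cofinally), then this lift carries the root $w_n$ of $\Gamma_v$ to the root of $\Gamma_w$ at every level, so $\Gamma_v \cong \Gamma_w$ as rooted (hence unrooted) graphs; the limit is then an isomorphism of the infinite graphs. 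To handle cofinality, I would note that if two words agree from position $n_0$ on, then the balls around their roots stabilize to the same ball, and more directly: changing finitely many letters only changes the choice of root inside a fixed finite subgraph, so the unrooted graphs $\Gamma_v$ and $\Gamma_w$ are literally isomorphic (indeed the underlying unrooted graph depends only on the cofinality class — actually only on the orbit, which is the point). Combining the two observations gives the "if" direction.

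For the "only if" direction, suppose $\Gamma_v \cong \Gamma_w$ as unrooted graphs. The strategy is to recover the word from the hierarchical hole structure described in the paragraph after Fig. \ref{figura}. The infinite graph $\Gamma_w$ contains, for each $k \geq 2$, a canonical nested sequence of holes $H_k$ (the level-$k$ hole containing the root's copy), and the relative position of the level-$k$ copy containing the root inside the level-$(k{+}1)$ structure is exactly the letter $x_k$ (up to the ambiguity that a single vertex may carry two names, as in the $a24 \equiv d43$ example). An unrooted isomorphism $\phi \colon \Gamma_v \to \Gamma_w$ must send holes to holes of the same level (holes are detected purely metrically/combinatorially: $H_k$ has $4 \cdot 3^{k-2}$ boundary vertices and bounds a region with no interior vertices), hence it must send the nested hole sequence of $\Gamma_v$ to that of $\Gamma_w$, inducing a bijection of the eight-copy configurations at each level that is a symmetry of $\overline{\Gamma}$, i.e. an element of $G$. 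Tracking how $\phi$ permutes copies at each level produces a single $\sigma \in G$ (one must check the permutation is eventually constant in $k$ — it is, because $\phi$ restricted to a deep enough subgraph must respect the full rigid structure) such that $x_1' x_2' \ldots$ and $\sigma(x_1) \sigma(x_2) \ldots$ describe the same nested hole sequence, hence are cofinal by the discussion in the paragraph preceding the theorem ("if $v_n$ and $w_n$ correspond to the same vertex of $\Gamma_n$ for every $n \geq n_0$, then $v$ and $w$ must be cofinal").

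The main obstacle, as the authors emphasize in the introduction, is precisely that the carpet is \emph{infinitely} ramified: unlike the gasket, two copies can share an entire edge (a long side), so one cannot separate the pieces by removing finitely many vertices, and the "cut points" argument that works for the gasket fails. Consequently the delicate step is the rigidity claim in the "only if" direction — showing that an unrooted isomorphism must respect the hierarchical decomposition into eight subcopies at \emph{every} level, and that the induced sequence of permutations stabilizes to a single $\sigma \in G$. I expect this to require a careful analysis of the boundary curves of the holes $H_k$ and of the "gluing vertices" between adjacent copies (the vertices $r, s, t, \ldots$ in Fig. \ref{fig1}), arguing that these distinguished vertices are characterized by local metric invariants (degree, number and sizes of nearby holes) and must therefore be preserved by $\phi$. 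The "if" direction and the reduction of the "only if" direction to cofinality, by contrast, are essentially bookkeeping once the rigidity is in hand; the substantive content, and presumably the bulk of the proof in \cite{Rodi}, lives in establishing that the only unrooted automorphisms of $\Gamma_n$ come from $D_8$ acting on $\overline{\Gamma}$ together with the root-relabelling freedom within a bounded subgraph.
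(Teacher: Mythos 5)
First, a point of order: this paper does not actually prove Theorem \ref{teoclass}. It is stated with the preamble ``In \cite{Rodi} we gave the following results'' and no argument is supplied, so there is no in-paper proof to compare your proposal against; I can only assess the proposal on its own terms as a reconstruction of what must be in \cite{Rodi}.

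As a plan it is sensible in outline (lift symmetries of the model graph for the ``if'' direction; recover the nested hole hierarchy and deduce cofinality for the ``only if'' direction), and you correctly isolate the rigidity of the hierarchical decomposition as the hard step --- but that step is only asserted, not proved, so the substantive content is still missing. More importantly, there is a concrete mismatch between the group your argument would produce and the group $G$ in the statement. You identify $(1357)$ with a $90^{\circ}$ rotation of the configuration in Fig.~\ref{fig1}; it is not. With the labelling of the paper ($0,2,4,6$ at the corners, $1,3,5,7$ at the edge-midpoints), the $90^{\circ}$ rotation acts as $(0246)(1357)$, whereas $(1357)$ fixes every corner position, and indeed \emph{every} element of $G=\langle(1357),(04)(13)(57)\rangle$ fixes the positions $2$ and $6$. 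So $G$ is a dihedral group of order $8$ inside $Sym(X)$, but it is \emph{not} the geometric symmetry group of $\overline{\Gamma}$, and your ``if'' direction (lift the geometric symmetry to an automorphism of each $\Gamma_n$) simply does not apply to the generator $(1357)$. Symmetrically, your ``only if'' direction concludes that the induced permutation at each level is ``a symmetry of $\overline{\Gamma}$, i.e.\ an element of $G$'' --- but those are two different subgroups of $Sym(X)$, so the argument as written proves (at best) the classification with $\langle(0246)(1357),(04)(13)(57)\rangle$ in place of $G$, which is a different statement. Any correct proof of the theorem as printed must explain this discrepancy --- that is, why the corner letters $0,2,4,6$ and the edge letters $1,3,5,7$ behave differently under the isomorphism relation, beyond what cofinality alone absorbs --- and your proposal does not engage with that at all. (If, on the other hand, the intended group really is the geometric one and $(1357)$ is a misprint, then your plan is the right shape but still defers the entire rigidity argument to ``a careful analysis'' that is not carried out.)
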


\begin{cor}\label{corografi}
There exist uncountably many classes of isomorphism of graphs $\Gamma_w$, $w\in Y\times X^\infty$, regarded as unrooted graphs.
\end{cor}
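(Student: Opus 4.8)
The plan is to derive this as a pure cardinality consequence of Theorem~\ref{teoclass}. By that theorem, whether $\Gamma_v\cong\Gamma_w$ as unrooted graphs depends only on the $X$-tails $\xi=x_1x_2\ldots$ and $\xi'=x_1'x_2'\ldots$ of the words, and not on the initial letters in $Y$; concretely, $\Gamma_v\cong\Gamma_w$ if and only if $\xi'\sim\sigma(\xi)$ for some $\sigma$ in the finite group $G\leq Sym(X)$ (of order $8$). Hence, writing $[\eta]_\sim$ for the cofinality class of $\eta\in X^\infty$, the set of words giving a graph isomorphic to a fixed $\Gamma_w$, viewed as a subset of $Y\times X^\infty$, is
$$
\{v\in Y\times X^\infty : \Gamma_v\cong\Gamma_w\}=Y\times\bigcup_{\sigma\in G}[\sigma(\xi)]_\sim .
$$

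The key step is to observe that this set is \emph{countable}. For any $\eta\in X^\infty$, a word is cofinal with $\eta$ exactly when it agrees with $\eta$ from some position onwards, so
$$
[\eta]_\sim=\bigcup_{n\geq 0}\{\zeta\in X^\infty : \zeta_k=\eta_k \text{ for all } k>n\},
$$
and the $n$-th set on the right has exactly $|X|^n=8^n$ elements; thus $[\eta]_\sim$ is a countable union of finite sets, hence countable. Taking the union over the finitely many $\sigma\in G$, and then the direct product with the finite alphabet $Y$, preserves countability, which establishes the claim.

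Finally, the space $Y\times X^\infty$ is homeomorphic to the Cantor set (Section~\ref{sectionrodi}), so it has cardinality $2^{\aleph_0}$; in particular it is uncountable. If there were only countably many isomorphism classes among the graphs $\Gamma_w$, then $Y\times X^\infty$ would be a countable union of the countable sets just described, hence countable --- a contradiction. Therefore there are uncountably many (indeed $2^{\aleph_0}$) isomorphism classes, as claimed. There is no substantial obstacle here: all the difficulty is already absorbed into Theorem~\ref{teoclass}, and the only point requiring a (routine) argument is the countability of cofinality classes carried out above.
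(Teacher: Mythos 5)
Your argument is correct: by Theorem~\ref{teoclass} each isomorphism class of graphs corresponds to a subset of $Y\times X^\infty$ of the form $Y\times\bigcup_{\sigma\in G}[\sigma(\xi)]_\sim$, which is countable since $G$ is finite and each cofinality class is a countable union of finite sets, while $Y\times X^\infty$ has the cardinality of the continuum. The paper itself states this corollary without proof (it is quoted from \cite{Rodi}), but the cardinality argument you give is the standard and essentially the only natural way to deduce it from Theorem~\ref{teoclass}, so there is nothing to object to.
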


\begin{example}\rm
In Fig. \ref{natale}, we have represented a finite part of the unrooted graph $\Gamma_{a0^\infty}$, where we have highlighted, by using black squares, the holes $H_n, n = 2, \ldots,5$ obtained in the first steps of the recursive construction of the infinite graph. We will use this kind of representation in the sequel of the paper.
\begin{figure}[h]
\begin{center}
\includegraphics[width=0.45\textwidth]{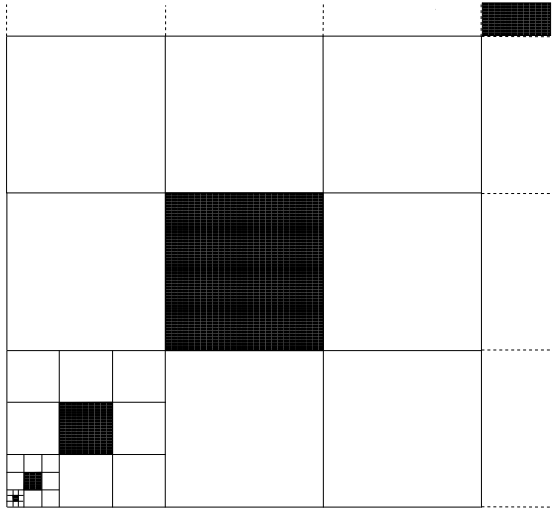}
\end{center}
\caption{A finite part of the unrooted graph $\Gamma_{a0^\infty}$.} \label{natale}
\end{figure}
\end{example}

\section{The metric compactification of the Sierpi\'{n}ski carpet graphs}\label{sectioncompactification}

Let $w = yx_1x_2\ldots \in Y\times X^\infty$, with $Y=\{a,b,c,d\}$ and $X=\{0,1,2,3,4,5,6,7\}$, and let us denote by $\Gamma_w$, as usual, the associated infinite carpet graph. In the sequel of this section, we will largely make use of an embedding of such a graph into the $2$-dimensional lattice $\mathbb{Z}^2$, which can be regarded as the Cayley graph of the Abelian group $\mathbb{Z}^2=\{\textbf{v} = (x,y): x,y\in \mathbb{Z}\}$ with respect to the symmetric generating set $\{\pm{\bf e}_1 = \pm (1,0), \pm {\bf e}_2=\pm(0,1)\}$. More precisely, this embedding is performed in such a way that the root $w$ of $\Gamma_w$ coincides with the point $(0,0)$ of the lattice, and each horizontal edge of $\Gamma_w$ coincides with an edge of $\mathbb{Z}^2$ connecting two vertices of type ${\bf v}$ and ${\bf v}\pm {\bf e}_1$, whereas each vertical edge of $\Gamma_w$ coincides with an edge of $\mathbb{Z}^2$ connecting two vertices of type ${\bf v}$ and ${\bf v}\pm {\bf e}_2$.

\subsection{Infinite growth and obstructions}\label{subsection1}

In this subsection, we describe what are the \lq\lq directions\rq\rq in which the graph $\Gamma_w$ can grow, and where the holes are settled with respect to the root $w$ in the recursive construction of the graph. We start with the following basic definitions, concerning the topological structure of an infinite carpet graph, that will be fundamental for the investigation and the classification of the metric boundary of the graphs $\{\Gamma_w\}_{w\in Y\times X^\infty}$. By using the embedding introduced above, we say that:
\begin{enumerate}
\item the graph $\Gamma_w$ has \textit{infinite growth} in the direction $d_1$ (resp. $d_3$, $d_5$, $d_7$) if its vertex set contains an (unbounded) subsequence of the vertex sequence $\{k{\bf e}_2\}_{k\in \mathbb{N}}$ (resp. $\{-k{\bf e}_1\}_{k\in \mathbb{N}}$, $\{-k{\bf e}_2\}_{k\in \mathbb{N}}$, $\{k{\bf e}_1\}_{k\in \mathbb{N}}$);
\item the graph $\Gamma_w$ has \textit{diagonal infinite growth} in the direction $d_{7,1}$ (resp. $d_{1,3}$, $d_{3,5}$, $d_{5,7}$) if its vertex set contains a sequence of vertices of type $\{a_k{\bf e}_1+ b_k{\bf e}_2\}$ (resp. $\{-a_k{\bf e}_1+ b_k{\bf e}_2\}$, $\{-a_k{\bf e}_1- b_k{\bf e}_2\}$, $\{a_k{\bf e}_1- b_k{\bf e}_2\}$), with
    $a_k,b_k \in \mathbb{N}$, and $a_k,b_k \to +\infty$ as $k\to +\infty$;
\item the graph $\Gamma_w$ has an \textit{obstruction} in the direction $d_1$ (resp. $d_3$, $d_5$, $d_7$) if there exists an infinite subset $M\subseteq \mathbb{N}$ and an increasing sequence $\{h_m\}_{m\in M} \subseteq \mathbb{N}$ such that the vertex $h_m{\bf e}_2$ (resp. $-h_m{\bf e}_1$, $-h_m{\bf e}_2$, $h_m{\bf e}_1$)
    belongs to the side $s^m_1$ (resp. $s^m_3$, $s^m_5$, $s^m_7$) of a hole isomorphic to $H_m$ in $\Gamma_w$, for every $m\in M$.
\end{enumerate}

\begin{remark}\label{13gennaio}\rm
It follows from the definition that, if the graph $\Gamma_w$ has an obstruction in the direction $d_i$, then it has infinite growth in the direction $d_i$, as well as diagonal infinite growth in the directions $d_{i-2,i}$ and $d_{i,i+2}$. Notice that the inverse implication is not true (for instance, the graph $\Gamma_{a0^\infty}$ in Fig. \ref{natale} has infinite growth in the directions $d_1$ and $d_7$, diagonal infinite growth in the direction $d_{7,1}$, but no obstruction).
\end{remark}

Now, for every $i=0,1,\ldots, 7$, we define:
$$
N_i = \left|\{j\in \mathbb{N} \ : \ x_j = i\}\right|.
$$
Keeping these definitions in our mind, we are able to prove the following proposition.

\begin{prop}\label{criteri_enne _i}
Let $w = yx_1x_2\ldots \in Y\times X^\infty$, with $X=\{0,1,2,3,4,5,6,7\}$ and $Y=\{a,b,c,d\}$.
\begin{enumerate}
\item Let $i=1,3,5,7$ and suppose $N_i = +\infty$. Then the graph $\Gamma_w$ has infinite growth in the three directions $d_j$, with $j\neq i+4 \mod 8$. Moreover, $\Gamma_w$ has an obstruction in the direction $d_i$.
\item Let $i=0,2,4,8$ and suppose $N_i = +\infty$. Then the graph $\Gamma_w$ has infinite growth in the two directions $d_{i+1}$ and $d_{i-1}$, where $i\pm 1$ must be taken modulo $8$.
\item Let $i=1,3,5,7$. Then the graph $\Gamma_w$ has diagonal infinite growth in the direction $d_{i,i+2}$ ($i+2$ must be taken modulo $8$) if at least one among $N_i, N_{i+1}, N_{i+2}$ is infinite.
    \end{enumerate}
\end{prop}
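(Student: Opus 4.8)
The plan is to argue inside the embedding of $\Gamma_w$ into $\mathbb{Z}^2$ fixed at the beginning of this section, tracking where the root of $\Gamma_w^n$ sits inside the smallest axis-parallel square containing $\Gamma_w^n$. Rooting $\Gamma_w^n$ at the origin, write this square as $[-\alpha_n,\beta_n]\times[-\gamma_n,\delta_n]$, so that $\alpha_n,\beta_n,\gamma_n,\delta_n\in\mathbb{N}$ measure how far $\Gamma_w^n$ reaches to the left, right, below and above the root, and $\alpha_n+\beta_n=\gamma_n+\delta_n=3^{n-1}$. By induction on $n$, the four sides of this square are complete segments of vertices of $\Gamma_w$; in particular $(-\alpha_n,0)$, $(\beta_n,0)$, $(0,-\gamma_n)$, $(0,\delta_n)$ and the four corners of the square are vertices of $\Gamma_w$ for every $n$. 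First I would record the step $n\to n+1$: $\Gamma_w^{n+1}$ is obtained by gluing $8$ translated copies of $\Gamma_w^n$ into the nine cells --- centre excluded --- of a $3\times3$ grid of squares of side $3^{n-1}$, the new root being the root of the copy placed at position $x_n$ of $\overline\Gamma$. If $(u_j,v_j)\in\{0,1,2\}^2$ denotes the horizontal and vertical index of the $j$-th cell of $\overline\Gamma$, then
\[
\begin{array}{ll}
\alpha_{n+1}=\alpha_n+u_{x_n}3^{n-1}, & \beta_{n+1}=\beta_n+(2-u_{x_n})3^{n-1},\\
\gamma_{n+1}=\gamma_n+v_{x_n}3^{n-1}, & \delta_{n+1}=\delta_n+(2-v_{x_n})3^{n-1}.
\end{array}
\]
In particular all four sequences are non-decreasing, and reading off the cells of $\overline\Gamma$ one gets: $\alpha_n\to\infty$ iff $N_1+N_2+N_3+N_4+N_5=\infty$; $\beta_n\to\infty$ iff $N_0+N_1+N_5+N_6+N_7=\infty$; $\gamma_n\to\infty$ iff $N_3+N_4+N_5+N_6+N_7=\infty$; $\delta_n\to\infty$ iff $N_0+N_1+N_2+N_3+N_7=\infty$.

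Given this, the infinite-growth statements are immediate. Since $(0,\delta_n)$ is always a vertex of $\Gamma_w$, the graph has infinite growth in $d_1$ as soon as $\delta_n\to\infty$, and symmetrically $d_3\leftrightarrow\alpha_n$, $d_5\leftrightarrow\gamma_n$, $d_7\leftrightarrow\beta_n$. For odd $i$ the index $i$ belongs to exactly three of the four index sets above --- to all of them except the one attached to $d_{i+4}$ --- so $N_i=\infty$ forces infinite growth in the three directions $d_j$, $j\neq i+4\pmod 8$; for even $i$ the index $i$ belongs to exactly the two sets attached to $d_{i-1}$ and $d_{i+1}$, which is part $(2)$. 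For part $(3)$, the corner of $\Gamma_w^n$ lying in the quadrant $d_{i,i+2}$ --- for instance the top-left corner $(-\alpha_n,\delta_n)$ for $d_{1,3}$ --- is a vertex both of whose coordinates tend to $+\infty$ as soon as the two relevant parameters do, and one checks that each of $N_i,N_{i+1},N_{i+2}$ lies in both of the two relevant index sets, so infiniteness of any one of them yields diagonal infinite growth in $d_{i,i+2}$.

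It remains to produce the obstruction in part $(1)$. By the dihedral symmetry of the construction --- the group $G\cong D_4$ of Theorem \ref{teoclass} acting on words by the geometric symmetries of the square, which fix the root and permute the $d_i$ and the $N_i$ coherently --- it suffices to treat $i=1$. Let $n$ be such that $x_n=1$. Then the root of $\Gamma_w^{n+1}$ lies in the bottom-middle copy of $\Gamma_w^n$, so the central hole $H_{n+1}$ of $\Gamma_w^{n+1}$, a square of side $3^{n-1}$ which is also a hole of the limit graph $\Gamma_w$, occupies in root coordinates the region $[-\alpha_n,\,3^{n-1}-\alpha_n]\times[3^{n-1}-\gamma_n,\,2\cdot3^{n-1}-\gamma_n]$. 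Since $0\le\alpha_n\le 3^{n-1}$, the vertical axis $x=0$ meets the horizontal span of this hole, so the vertex $h_{n+1}{\bf e}_2$ with $h_{n+1}:=2\cdot3^{n-1}-\gamma_n$ (which satisfies $3^{n-1}\le h_{n+1}<3^{n}$) belongs to the side $s^{n+1}_1$ of a hole isomorphic to $H_{n+1}$. Running $n$ over the infinitely many indices with $x_n=1$ gives holes of unbounded size, and since $h_{n+1}<3^{n}\le 3^{n'-1}\le h_{n'+1}$ whenever $n<n'$, the heights $h_{n+1}$ are already increasing along this set of indices; this is exactly the data required in the definition, so $\Gamma_w$ has an obstruction in $d_1$, hence in $d_i$ for every odd $i$ with $N_i=\infty$.

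The essentially nontrivial step is the first one --- extracting from the gluing onto $\overline\Gamma$ and the rule ``new root $=$ root of the copy at position $x_n$'' the precise cell of the $3\times3$ grid into which the root migrates, hence the recursion for $(\alpha_n,\beta_n,\gamma_n,\delta_n)$; afterwards parts $(1)$--$(3)$ reduce to bookkeeping with the sets $\{k:x_k=i\}$. The only genuine geometric input used for the obstruction is that the central hole created at each step survives into $\Gamma_w$ with unchanged position relative to the root, and that the side of that hole facing the relevant coordinate axis contains an actual vertex of $\Gamma_w$ lying on that axis.
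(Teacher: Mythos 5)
Your proof is correct and follows essentially the same route as the paper's: both arguments track where the root of $\Gamma_w^n$ is placed in the model graph $\overline{\Gamma}$ at each step and read off, relative to the root, the positions of the bounding square and of the newly created central hole. Your explicit recursion for $(\alpha_n,\beta_n,\gamma_n,\delta_n)$ merely makes quantitative what the paper states informally (\lq\lq the new biggest hole appears on the right of $w$\rq\rq), and as a by-product gives exact characterizations (if and only if) of infinite growth in each direction, slightly more than the proposition requires.
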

\begin{proof}
\begin{enumerate}
\item Suppose $N_7=+\infty$ (the other cases can be discussed similarly). Notice that if $x_m=7$, then, at the $(m+1)$-th step of the recursive construction of $\Gamma_w$, the root of the finite graph $\Gamma_w^{m}$ must be placed in the position $7$ of the model graph $\overline{\Gamma}$. This implies that in $\Gamma^{m+1}_w$ there is a hole isomorphic to $H_{m+1}$ on the right of the root $w$. As the number of letters of $w$ which are equal to $7$ is infinite, there must exist an unbounded set $M\subseteq \mathbb{N}$ and an increasing sequence $\{h_m\}_{m\in M}$ of natural numbers such that $h_m{\bf e}_1$ belongs to the side $s^m_7$ of a hole isomorphic to $H_m$ in $\Gamma_w$, for each $m\in M$, so that $\Gamma_w$ has an obstruction in the direction $d_7$.\\ \indent This also implies that $\Gamma_w$ has infinite growth in the direction $d_7$. On the other hand, in the recursive construction of the finite graphs $\Gamma_w^m$, the new biggest hole $H_m$ appears on the right of $w$, and this ensures the existence of an unbounded sequence of vertices of type $\{k{\bf e}_2\}_{k\in \mathbb{N}}$ in $\Gamma_w$, giving the infinite growth in the direction $d_1$. Analogously, the infinite growth in the direction $d_5$ can be proved.
\item  Suppose $N_0=+\infty$ (the other cases can be treated similarly). Notice that if $x_m=0$, then, at the $(m+1)$-th step of the recursive construction of $\Gamma_w$, the root of the finite graph $\Gamma_w^{m}$ must be placed in the position $0$ of $\overline{\Gamma}$. Since this happens for infinitely many indices, we are sure that there exist an unbounded sequence of vertices of type $\{k{\bf e}_2\}_{k\in \mathbb{N}}$ and an unbounded sequence of vertices of type $\{k{\bf e}_1\}_{k\in \mathbb{N}}$ in $\Gamma_w$. This implies that the graph $\Gamma_w$ has infinite growth in the directions $d_1$ and $d_7$, respectively.
\item Consider the case of the direction $d_{1,3}$ (the other cases can be treated similarly).  It follows from what we said above that if $N_1=+\infty$ (or $N_2=+\infty$, or $N_3=+\infty$) then the graph $\Gamma_w$ has infinite growth both in the direction $d_1$ and $d_3$, what easily implies the existence of a  sequence of vertices of type $\{-a_k{\bf e}_1+b_k{\bf e}_2 :  a_k,b_k\in \mathbb{N}; a_k,b_k \to +\infty \textrm{ as } k\to +\infty\}$. The claim follows.
    \end{enumerate}
\end{proof}

\begin{example}\rm
The graph $\Gamma_{a(67)^\infty}$ has infinite growth in the directions $d_1, d_5, d_7$; it has an obstruction in the direction $d_7$; finally, it has diagonal infinite growth in the directions $d_{7,1}$ and $d_{5,7}$ (see Fig. \ref{fig5}). On the other hand, the graph $\Gamma_{a(02)^\infty}$ has infinite growth in the directions $d_1, d_3, d_7$, but it has no obstruction; it has diagonal infinite growth in the directions $d_{1,3}$ and $d_{7,1}$ (see Fig. \ref{fig6}).

\begin{figure}
\begin{center}
\includegraphics[width=0.4\textwidth]{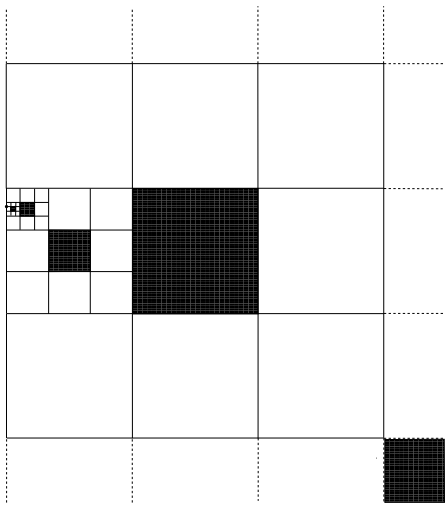}
\end{center}
\caption{A finite part of the rooted graph $\Gamma_{a(67)^\infty}$.} \label{fig5}
\end{figure}
\begin{figure}
\begin{center}
\includegraphics[width=0.4\textwidth]{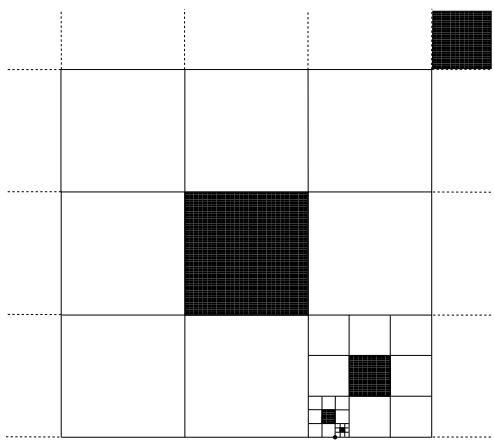}
\end{center}
\caption{A finite part of the rooted graph $\Gamma_{a(02)^\infty}$.} \label{fig6}
\end{figure}
\end{example}

\subsection{Classification of the metric boundary}\label{subsection2}
In the present subsection, we give the main results of the paper, consisting of the explicit classification of the metric boundary $\partial\Gamma_w$ of the graph $\Gamma_w$, for every $w\in Y\times X^\infty$.

We start by discussing the case of an infinite graph $\Gamma_w$ with obstruction. In order to simplify the notation we can suppose, without loss of generality, that the word $w$ is such that the graph $\Gamma_w$ has an obstruction in the direction $d_7$. From the definition of obstruction, we know that there exist an infinite subset $M\subseteq \mathbb{N}$ and an increasing sequence $\{h_m\}_{m\in M}\subseteq \mathbb{N}$ such that the vertex $h_m{\bf e}_1$ belongs to the side $s^m_7$ of a hole isomorphic to $H_m$ in $\Gamma_w$, for every $m\in M$. We will show in Lemma \ref{lemmauguaglianza} that it is possible to construct a sequence of vertices $\{z_m\}_{m\in M}$, each belonging to the side $s^m_7$ of the hole isomorphic to $H_m$, such that the length of a minimal path from $w$ to $z_m$ passing through the vertex $C_m$, is equal to the length of a minimal path from $w$ to $z_m$ passing through the vertex $B_m$. Observe that what we deduce in this specific situation $N_7=+\infty$ can be straightforwardly extended to the three other possible cases of obstruction.

In what follows, we denote by $\overline{d}(w,z_m)$ the length of a minimal path connecting $w$ and $z_m$ through the vertex $C_m$. Similarly, we denote by $\underline{d}(w,z_m)$ the length of a minimal path connecting $w$ and $z_m$ through the vertex $B_m$. Notice that, if $d$ is the geodesic distance in $\Gamma_w$, one has, by definition:
\begin{eqnarray}\label{novemberrevision}
\overline{d}(w,z_m)=d(w,C_m)+d(C_m,z_m) \ \textrm{ and } \  \underline{d}(w,z_m)=d(w,B_m)+d(B_m,z_m).
\end{eqnarray}

\begin{lemma}\label{lemmauguaglianza}
With the above notations, there exists a sequence $\{z_m\}_{m\in M}$, with $z_m \in s_7^m$ for every $m\in M$, such that:
$$
\overline{d}(w,z_m) = \underline{d}(w,z_m)= d(w, z_m).
$$
\end{lemma}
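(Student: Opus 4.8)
The plan is to use a continuity/discrete-intermediate-value argument along the side $s_7^m$. Fix $m \in M$ and consider the hole isomorphic to $H_m$ whose side $s_7^m$ contains the vertex $h_m{\bf e}_1$; label its corners $A_m,B_m,C_m,D_m$ counterclockwise as in Figure \ref{hole}, so that $s_7^m$ is the right side, joining $B_m$ (bottom-right) to $C_m$ (top-right), and consisting of $3^{m-2}+1$ vertices. For a vertex $z$ on $s_7^m$, define $f_m(z) = \overline{d}(w,z) - \underline{d}(w,z) = \big(d(w,C_m) + d(C_m,z)\big) - \big(d(w,B_m) + d(B_m,z)\big)$, using the notation of \eqref{novemberrevision}. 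The key structural observation is that any path from $w$ to a vertex on $s_7^m$ must reach the side of the hole by passing through one of its two endpoints $B_m$ or $C_m$ (one cannot cross the hole, and the side $s_7^m$ lies on the boundary of a hole with no vertices in its interior); hence $d(w,z) = \min\{\overline{d}(w,z), \underline{d}(w,z)\}$ for every $z \in s_7^m$, and it suffices to find $z_m$ with $f_m(z_m) = 0$.

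The main steps are then: (i) evaluate $f_m$ at the two endpoints of $s_7^m$ and show the signs are opposite (or one is zero). At $z = C_m$ we get $f_m(C_m) = d(w,C_m) - d(w,B_m) - d(B_m,C_m)$, which is $\le 0$ by the triangle inequality; at $z = B_m$ we get $f_m(B_m) = d(w,C_m) + d(C_m,B_m) - d(w,B_m) \ge 0$, again by the triangle inequality. (ii) Show that $f_m$ changes by exactly $\pm 2$ (or $0$) as $z$ moves to a neighbouring vertex along $s_7^m$: indeed, moving $z$ one step along the side changes $d(C_m,z)$ by $\pm 1$ and changes $d(B_m,z)$ by $\mp 1$, because $s_7^m$ is itself a geodesic segment of the graph between its endpoints (each side of the hole is a straight segment of length $3^{m-2}$ in the $\mathbb{Z}^2$-embedding), so $d(C_m,z) + d(B_m,z) = d(B_m,C_m)$ is constant along the side and the two summands move in opposite directions. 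Thus $f_m$ is a function on the path $s_7^m$ with increments in $\{-2,0,+2\}$, nonnegative at the $B_m$-end and nonpositive at the $C_m$-end. (iii) Conclude by the discrete intermediate value theorem: since the parity of $f_m$ is constant along the side and $f_m$ takes a value $\ge 0$ and a value $\le 0$ with steps of size at most $2$, there is a vertex $z_m \in s_7^m$ with $f_m(z_m) = 0$; for this $z_m$ we have $\overline{d}(w,z_m) = \underline{d}(w,z_m)$, and by the observation above both equal $d(w,z_m)$. Finally note that the argument used only that $s_7^m$ is a geodesic segment of the graph bounding a hole and that $w$ lies outside the hole, so the case $N_7 = +\infty$ treated here transfers verbatim to the other three obstruction directions.

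The step I expect to be the main obstacle is (ii): justifying cleanly that $d(C_m,z) + d(B_m,z)$ is constant as $z$ ranges over $s_7^m$, i.e. that each side of the hole $H_m$ is a geodesic path in $\Gamma_w$ (not merely a path in the $\mathbb{Z}^2$-lattice), and that moving one step along it genuinely changes both distances by exactly one unit rather than, say, leaving one of them unchanged because a shortcut exists through the rest of the graph. This requires knowing that the shortest route from a corner of a hole to a point on an adjacent side of that same hole does in fact run along the side — a fact that should follow from the recursive self-similar structure and the $\mathbb{Z}^2$-embedding (the side is a straight lattice segment and any detour off it can only be longer), but it needs to be argued with some care about how holes of different levels are nested. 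Once the side is known to be geodesic with $d(B_m,z)+d(C_m,z) = d(B_m,C_m)$, the rest is the routine parity/IVT bookkeeping sketched above.
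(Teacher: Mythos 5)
Your argument is correct in substance but reaches the conclusion by a genuinely different route from the paper. The paper does not use an intermediate-value argument at all: it writes $B_m(b_1,b_2)$ and $C_m(c_1,c_2)$ in the $\mathbb{Z}^2$-embedding, notes $c_1=b_1$, $d(w,C_m)=c_1+c_2$, $d(w,B_m)=b_1-b_2$, and simply \emph{exhibits} the balancing vertex by setting $z_2=b_2+c_2$, after which $\overline{d}(w,z_m)=\underline{d}(w,z_m)$ is a two-line identity. Your sign-change argument proves existence without guessing the point, and it is arguably more robust (it uses only the triangle inequality at the two corners plus the fact that $s_7^m$ is a geodesic segment), but two things should be tightened. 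First, your discrete IVT step is incomplete as stated: ``constant parity, steps of size at most $2$, a value $\geq 0$ and a value $\leq 0$'' does not force a zero unless the common parity is \emph{even} (the sequence $3,1,-1$ satisfies all your hypotheses). Evenness does hold here --- either because $\Gamma_w$ embeds in the bipartite graph $\mathbb{Z}^2$, so $d(w,B_m)+d(B_m,C_m)+d(C_m,w)$ is even, or by computing the endpoint values, which are $f_m(B_m)=2c_2\geq 0$ and $f_m(C_m)=2b_2\leq 0$ --- but you must say so. (Incidentally, solving $f_m(z)=0$ recovers exactly the paper's choice $z_2=b_2+c_2$.) Second, your structural claim that every path from $w$ to a vertex of $s_7^m$ passes through $B_m$ or $C_m$ is literally false (a path can approach $z$ from the right of the hole without touching either corner); what you actually need, and what both you and the paper leave at the same level of implicitness, is the weaker inequality $d(w,z)\geq\min\{\overline{d}(w,z),\underline{d}(w,z)\}$, i.e.\ that no route around the hole beats the two corner routes. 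Finally, note that the explicit coordinates of $z_m$ are used later (in Lemma \ref{lemmadicembre} and in the closing example), so even with the IVT formulation you would want to record where the zero of $f_m$ sits.
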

\begin{proof}
Notice that the term $d(w,C_m)$ in \eqref{novemberrevision} can be expressed as the sum of a horizontal and a vertical contribution. More precisely, if in the embedding of $\Gamma_w$ into $\mathbb{Z}^2$ the vertex $C_m$ has the representation $C_m(c_1(m), c_2(m))$, then $d(w,C_m)=c_1(m)+c_2(m)$ since, by construction, it must be $c_1(m), c_2(m) >0$, for $m$ large enough. In fact, the distance between $w$ and $C_m$ in the graph coincides with the distance between $w$ and $C_m$ regarded as vertices of $\mathbb{Z}^2$, by virtue of the embedding defined above. Now let $B_m(b_1(m), b_2(m))$. Note that it must be $b_1(m)>0, b_2(m) <0$, for $m$ large enough. We omit in what follows the dependence of the coordinates on $m$.

\indent Put $z:=c_1=b_1$ and observe that $c_2-b_2=3^{m-2}$, which is exactly the length of the side of $H_m$. Let $z_m$ be the vertex $(z,z_2)$. Note that it must be $z_2\in\{b_2,b_2+1, \ldots, b_2+3^{m-2}=c_2\}$, since the vertex $z_m$ belongs to the side $s^m_7$.  For our purpose, it is enough to choose $z_2$ such that $z_2-b_2=c_2$. In fact
\begin{eqnarray*}
\overline{d}(w,z_m)&=&d(w,C_m)+d(C_m,z_m) \\
&=& (c_1 +c_2)+(c_2-z_2)=b_1+z_2-b_2-b_2 \\
&=& b_1-b_2+(z_2-b_2)\\
&=& d(w,B_m)+d(B_m,z_m)=\underline{d}(w,z_m).
\end{eqnarray*}
\end{proof}
From now on, we denote by $\{z_m\}_{m\in M}$ the sequence obtained in the previous lemma, which will be said to be \textit{antipodal} to the root $w$ in the direction $d_7$.

\begin{remark}\rm
We stress once more the fact that the argument used in the proof of Lemma \ref{lemmauguaglianza} can be applied also to a graph $\Gamma_w$ with an obstruction in the direction $d_i$, with $i\neq 7$, proving that there exists an antipodal sequence to $w$ in the direction $d_i$ of the obstruction of $\Gamma_w$.
\end{remark}

In the following proposition, we show that the antipodal sequences of vertices in $\Gamma_w$ correspond to points of $\partial \Gamma_w$ which are not-Busemann. We will use again the embedding of $\Gamma_w$ in $\mathbb{Z}^2$.   We need the following lemma.

\begin{lemma} \label{lemmadicembre}
The sequence $\{z_m\}_{m\in M}$ forms a weakly-geodesic ray in $\Gamma_w$ and so it defines an element of $\partial \Gamma_w$.
\end{lemma}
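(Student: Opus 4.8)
The plan is to realize $\{z_m\}_{m\in M}$ as the range of a weakly-geodesic ray and then to apply the theorem of Rieffel quoted in Section~\ref{sectionrieffel}. Put $t_m:=d(w,z_m)$; since $z_m\in s_7^m$ its first coordinate in the embedding is $h_m$, and the computation in the proof of Lemma~\ref{lemmauguaglianza} gives $t_m=h_m+3^{m-2}$, a quantity strictly increasing in $m$, so $t_m\to+\infty$. Define $\gamma\colon T\to V(\Gamma_w)$ on $T:=\{0\}\cup\{t_m:m\in M\}$ by $\gamma(0):=w$ and $\gamma(t_m):=z_m$. The first condition in Definition~\ref{defigeodesic}(3) is immediate, since $d(\gamma(t_m),\gamma(0))=d(z_m,w)=t_m$. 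For the second condition, fix a vertex $y$ and $\varepsilon>0$, and set $f_m:=d(z_m,y)-d(z_m,w)=d(z_m,y)-t_m$. Since $|f_m|\le d(w,y)$ and $d(z_m,y)-d(z_{m'},y)-(t_m-t_{m'})=f_m-f_{m'}$, it is enough to prove that $(f_m)_{m\in M}$ converges: any $N$ with $|f_m-f_{m'}|<\varepsilon$ for $t_m,t_{m'}\ge N$ then settles the inequality.

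The core of the argument is a description of the minimal paths from $y$ to $z_m$. Working in $\mathbb{Z}^2$, write $B_m=(h_m,b_2)$, $C_m=(h_m,c_2)$ (with $b_2\le 0\le c_2$ and $c_2-b_2=3^{m-2}$) and $z_m=(h_m,b_2+c_2)$, as in the proof of Lemma~\ref{lemmauguaglianza}. I would show that for $m$ large the empty interior of the obstruction hole, together with the fact that going around the level-$m$ cell carrying it costs $\sim 3^m$, forces every minimal path from $y$ to $z_m$ to reach $z_m$ through $B_m$ or through $C_m$; that is,
$$d(y,z_m)=\min\bigl\{\,d(y,B_m)+d(B_m,z_m),\ d(y,C_m)+d(C_m,z_m)\,\bigr\}.$$
By Lemma~\ref{lemmauguaglianza} the same minimum equals $t_m$ with equal entries, hence $d(B_m,z_m)=t_m-d(w,B_m)$ and $d(C_m,z_m)=t_m-d(w,C_m)$; substituting and cancelling $t_m$ gives
$$f_m=\min\bigl\{\,d(y,B_m)-d(w,B_m),\ d(y,C_m)-d(w,C_m)\,\bigr\}=:\min\{\beta_m,\chi_m\}.$$

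It remains to prove that $\beta_m$ and $\chi_m$ are eventually constant, and this rests on two facts. First, a recursion for the position of the root inside the finite approximations $\Gamma_w^k$ shows that the obstruction makes the holes grow without bound both above and below the ray $\{k{\bf e}_1\}_{k\in\mathbb{N}}$: $c_2\to+\infty$ and $b_2\to-\infty$ as $m\to\infty$ along $M$. Second, because $C_m$ (resp.\ $B_m$) recedes to infinity in the NE (resp.\ SE) diagonal direction, a minimal path from $y$, or from $w$, to $C_m$ may be chosen monotone: once it has escaped the bounded configuration of holes near $y$, it rounds every further hole it meets along its boundary at no additional cost, so for $m$ large $d(y,C_m)=\|C_m-y\|_1+\rho_+(y)$ and $d(w,C_m)=\|C_m\|_1$, where $\|\cdot\|_1$ is the graph distance in $\mathbb{Z}^2$ and $\rho_+(y)\ge 0$ does not depend on $m$; similarly $d(y,B_m)=\|B_m-y\|_1+\rho_-(y)$ and $d(w,B_m)=\|B_m\|_1$. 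As soon as $c_2>y_2$ and $b_2<y_2$ — which happens for $m$ large by the first fact — a direct $\ell^1$-computation (using $h_m\to+\infty$) yields $\chi_m=-y_1-y_2+\rho_+(y)$ and $\beta_m=-y_1+y_2+\rho_-(y)$, both independent of $m$. Therefore $f_m=\min\{\beta_m,\chi_m\}$ is eventually constant, in particular convergent; so $\gamma$ is a weakly-geodesic ray and defines an element of $\partial\Gamma_w$.

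The main obstacle is precisely the carpet-specific geometry underlying the corner-rounding identity and the two facts above: controlling minimal paths across a region punctured by holes at every scale, proving that the obstruction forces $c_2\to+\infty$ and $b_2\to-\infty$, and checking that the distances to the far corners $B_m,C_m$ remain $\ell^1$-distances up to a bounded, eventually stable correction. This is where the infinite ramification of $\Gamma_w$, highlighted in the Introduction, makes the analysis heavier than in the gasket case.
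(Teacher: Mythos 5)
Your proposal is correct and follows essentially the same route as the paper: parametrize the sequence by $T=\{0\}\cup\{d(w,z_m)\}$, reduce condition (3) of Definition~\ref{defigeodesic} to the stabilization of $d(z_m,y)-d(w,z_m)$, and compute this via geodesics forced through the corners $B_m$ or $C_m$ with $\ell^1$-distances in the $\mathbb{Z}^2$-embedding (the paper splits into the cases $y_2\geq 0$ and $y_2<0$ rather than writing the minimum of the two corner routes, but the computation is the same). Your version merely makes explicit the geometric inputs the paper leaves implicit (the corner-rounding identity, $c_2(m)\to+\infty$ and $b_2(m)\to-\infty$, and the monotone-path argument for the $\ell^1$ formulas), which is a reasonable strengthening rather than a deviation.
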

\begin{proof}
Put $d_m = d(w, z_m)$, for each $m\in M$. Let $T=\{0\}\cup \{d_m : m\in M\}$. Define the map $\gamma: T \to \Gamma_w$ as $\gamma(0)=w$ and $\gamma(d_m) = z_m$, for each $m\in M$. Then this mapping defines a weakly-geodesic ray in $\Gamma_w$. In order to prove that, take an arbitrary vertex $y\in \Gamma_w$; put $z_m(z_1(m), z_2(m))$ and $y(y_1, y_2)$. Let us consider the case $y_2\geq 0$. By construction, we have $d(\gamma(d_m), \gamma(0))-d_m=0$ and
$$
d(\gamma(d_m),y) = d(z_m,y) = (c_2(m)-y_2)+(c_1(m)-y_1)+(c_2(m)-z_2(m)),
$$
and similarly for $d(\gamma(d_n),y)$. So it is straightforward to check that, for every $n\geq m$ large enough, one has:
$$
d(\gamma(d_n),y) - d(\gamma(d_m),y) - (d_n-d_m) = 0,
$$
since
$$
d_m = c_2(m) -z_2(m)+c_1(m)+c_2(m) \qquad \qquad    d_n = c_2(n) -z_2(n)+c_1(n)+c_2(n).
$$
The case $y_2<0$ is analogous, but in this case the geodesic connecting $y$ to $z_m$ and $z_n$ passes through $B_m$ and $B_n$, respectively. This concludes the proof, as the Property (3) of Definition \ref{defigeodesic} is satisfied.
\end{proof}

\begin{prop}\label{propnonbusemann}
The boundary point determined by the antipodal sequence $\{z_m\}_{m\in M}$ is not a Busemann point.
\end{prop}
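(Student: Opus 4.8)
The plan is to argue by contradiction, invoking the criterion recalled at the end of Section~\ref{sectionrieffel}: in the graph setting, a point of $\partial\Gamma_w$ that is not the limit of any geodesic ray cannot be a Busemann point. So I let $u$ be the boundary point determined by the antipodal sequence $\{z_m\}_{m\in M}$ (a genuine point of $\partial\Gamma_w$ by Lemma~\ref{lemmadicembre}), and I assume for contradiction that some geodesic ray $\gamma'$ converges to $u$; I may take $\gamma'$ to be a sequence of vertices $\gamma'(0),\gamma'(1),\ldots$ with $d(\gamma'(i),\gamma'(j))=|i-j|$, and I write $\gamma'(n)=(p(n),q(n))$ in the embedding of $\Gamma_w$ into $\mathbb{Z}^2$.

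First I would determine the horofunction $g_u$ explicitly. From the minimal‑path computations in the proof of Lemma~\ref{lemmadicembre} --- where, for $m$ large, a geodesic from a fixed vertex $y=(y_1,y_2)$ to $z_m$ is forced to skirt the hole $H_m$ monotonically, through $C_m$ when $y_2\ge 0$ and through $B_m$ when $y_2<0$ --- one obtains, after letting $m\to\infty$,
$$
g_u(y_1,y_2)=-y_1-|y_2|,\qquad (y_1,y_2)\in\Gamma_w .
$$
Since $\gamma'$ converges to $u$, its vertices may be used as the defining net for $g_u$, and from $d(\gamma'(s),\gamma'(n))=s-n$ one deduces that $g_u$ decreases at unit rate along $\gamma'$, $g_u(\gamma'(n))=g_u(\gamma'(0))-n$, and more generally $g_u(v)=g_u(\gamma'(0))+\lim_{n\to\infty}\bigl(d(v,\gamma'(n))-n\bigr)$ for every vertex $v$. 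Plugging the formula for $g_u$ into the first identity gives $p(n)+|q(n)|=n+c_0$ with $c_0:=p(0)+|q(0)|$, so that $p+|q|$ increases by exactly $1$ along each edge of $\gamma'$. Checking the four possible unit steps, this forces each step of $\gamma'$ to be $+{\bf e}_1$, or $+{\bf e}_2$ while $q\ge 0$, or $-{\bf e}_2$ while $q\le 0$; hence $p(n)$ is non‑decreasing, $\gamma'$ never recrosses the horizontal axis, and exactly one of the following holds: $q(n)\ge 1$ for all large $n$; $q(n)\le -1$ for all large $n$; or $q(n)=0$ for every $n$.

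Next I would rule out the three cases. The case $q(n)\equiv 0$ is excluded by the obstruction hypothesis: there $\gamma'$ eventually coincides with a horizontal half‑line $\{(p_0+k,0):k\ge 0\}$, but by the proof of Lemma~\ref{lemmauguaglianza} the holes producing the antipodal sequence satisfy $b_2(m)<0<c_2(m)$, so their vertex‑free interiors meet the horizontal axis immediately to the left of the lines $x=b_1(m)=h_m$; since $h_m\to\infty$, the positive horizontal axis is not eventually contained in $\Gamma_w$ --- a contradiction. Using the symmetry $q\mapsto -q$ of $g_u$, it remains to treat the case $q(n)\ge 1$ for all large $n$, where $p(n)+q(n)=n+c_0$. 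Pick any vertex $v=(v_1,v_2)$ of $\Gamma_w$ with $v_2<0$ (for instance $v=B_{m_0}$ with $m_0\in M$ large, as those corners have negative second coordinate, or any vertex on the ray of infinite growth in direction $d_5$). Since $\Gamma_w\subseteq\mathbb{Z}^2$ we have $d(v,\gamma'(n))\ge |p(n)-v_1|+|q(n)-v_2|=|p(n)-v_1|+q(n)-v_2$, and substituting $n=p(n)+q(n)-c_0$,
$$
d(v,\gamma'(n))-n\ \ge\ |p(n)-v_1|-p(n)-v_2+c_0\ \ge\ -v_1-v_2+c_0 .
$$
Feeding this into the identity for $g_u(v)$ gives $g_u(v)\ge -v_1-v_2$, whereas directly $g_u(v)=-v_1+v_2$ because $v_2<0$; hence $v_2\ge 0$, a contradiction. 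Therefore no geodesic ray converges to $u$, and $u$ is not a Busemann point.

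The step I expect to be the real obstacle is the first one, justifying the closed form $g_u(y_1,y_2)=-y_1-|y_2|$: this rests on showing that for all sufficiently large $m$ every minimal path from a prescribed vertex to $z_m$ is compelled to go monotonically around the hole $H_m$ on the prescribed side, which is exactly the geometric content packaged into Lemmas~\ref{lemmauguaglianza} and~\ref{lemmadicembre}. Once that is available, everything else is elementary $\mathbb{Z}^2$‑bookkeeping together with the blocking property of the obstruction.
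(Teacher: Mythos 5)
Your proof is correct, but it takes a genuinely different route from the one in the paper. The paper never writes down the horofunction: it fixes the two vertices $\textbf{v}_1=(0,1)$ and $\textbf{v}_2=(0,-1)$, uses the fact that every shortest path from $\textbf{v}_1$ to $z_m$ rounds the hole through $C_m$ while every shortest path from $\textbf{v}_2$ to $z_m$ rounds it through $B_m$ (so the two families of geodesics are disjoint), and then shows that a hypothetical geodesic ray $\theta$ converging to the same boundary point would, via the eventual stabilization of the integer-valued functions $\varphi_v$, force a single vertex $\theta(m')$ to lie simultaneously on a geodesic from $\textbf{v}_1$ to $z_m$ and on one from $\textbf{v}_2$ to $z_m$ --- a contradiction. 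You instead compute the horofunction in closed form, $g_u(y_1,y_2)=-y_1-|y_2|$, deduce that any geodesic ray representing $u$ must be a monotone staircase confined to one of the closed half-planes $\{q\ge 0\}$ or $\{q\le 0\}$ (the purely horizontal ray being killed by the obstruction itself), and then extract a numerical contradiction by testing $g_u$ at a single vertex in the opposite open half-plane, where the kink of $|y_2|$ is felt. Both arguments exploit the same geometric phenomenon --- $z_m$ is reached equally efficiently over the top and under the bottom of $H_m$, and one geodesic can only choose one side --- and both rest on exactly the facts about minimal paths that the paper packages into Lemmas \ref{lemmauguaglianza} and \ref{lemmadicembre} (graph distance equals $\ell^1$ distance along monotone staircases, routing through $B_m$ or $C_m$ according to the sign of the ordinate), so you are on the same footing as the paper there. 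What your version buys is the explicit formula for $g_u$, a cleaner contradiction needing only one test vertex rather than a pair with provably disjoint geodesics, and an explicit treatment of the degenerate ray along the axis; what the paper's version buys is independence from the coordinate computation of the horofunction. Your tacit reduction of an arbitrary geodesic ray $\gamma\colon T\to\Gamma_w$ to one indexed by $\mathbb{N}$ with unit steps is the standard interpolation along shortest paths and is unproblematic in the graph setting.
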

\begin{proof}
First of all, notice that there exist at least two vertices $\textbf{v}_1, \textbf{v}_2$ in $\Gamma_w$ such that the shortest paths from $\textbf{v}_1$ to $z_m$ and from $\textbf{v}_2$ to $z_m$ do not intersect, for every $m$ (in the case of obstruction in the direction $d_7$ one can choose $\textbf{v}_1(0,1)$ and $\textbf{v}_2(0,-1)$, for example). Fix $\textbf{v}_1$ as the base point. Let $\gamma:\{0\}\cup M\to \Gamma_w$ be the weakly-geodesic ray such that $\gamma(m)=z_m$ and $\gamma(0)=\textbf{v}_1$.

If the boundary point determined by the antipodal sequence $\{z_m\}_{m\in M}$ is a Busemann point, then there exists a geodesic ray $\theta:\{0\}\cup M\to \Gamma_w$, with $\theta(0)=\textbf{v}_1$, such that
$$
\lim_{m\to +\infty} \varphi_v(\gamma(m))=\lim_{m\to +\infty}  \varphi_v(\theta(m)),
$$
for any $v\in \Gamma_w$.

Since the values taken by $\varphi_v$ are integer, then, for any $m$ sufficiently large, one has
$$
\lim_{m\to +\infty}  \varphi_v(\gamma(m))=\lim_{m\to +\infty}  \varphi_v(\theta(m))=\varphi_v(\gamma(m))=\varphi_v(\theta(m)):=f.
$$
From the definition of the function $\varphi_v$ and of weakly-geodesic and geodesic rays, we get, by choosing $v=\textbf{v}_2$ and $m$ large enough:
\begin{eqnarray}\label{formulaalfredo}
d(\gamma(m),\textbf{v}_2)=d(\theta(m),\textbf{v}_2)=m-f.
\end{eqnarray}
Moreover, for $m'<m$ large enough and with the choice $v=\theta(m')$, one gets:
$$
\varphi_{\theta(m')}(\gamma(m))= \varphi_{\theta(m')}(\theta(m)) = m-(m-m')=m'.
$$
This implies $d(\gamma(m), \theta(m'))= d(\gamma(m),\textbf{v}_1)- \varphi_{\theta(m')}(\gamma(m)) = m-m'$ and then
\begin{eqnarray*}
d(\gamma(m),\textbf{v}_1)&=& d(\theta(m'), \gamma(m))+ \varphi_{\theta(m')}(\gamma(m))\\
&=& d(\theta(m'), \gamma(m))+m'\\
&=& d(\theta(m'), \gamma(m))+ d(\theta(m'), \textbf{v}_1).
\end{eqnarray*}
On the other hand, we have by \eqref{formulaalfredo}
\begin{eqnarray*}
d(\gamma(m),\textbf{v}_2)&=&m-f= (m-m')+(m'-f)\\
&=& d( \gamma(m),\theta(m'))+ d(\theta(m'), \textbf{v}_2).
\end{eqnarray*}
This is impossible, because this would imply that there exist geodesic rays from $\textbf{v}_1$ to $z_m$ and from $\textbf{v}_2$ to $z_m$ both containing the common vertex $\theta(m')$. Absurd.
\end{proof}
The following propositions are given again for the special case of the obstruction in the direction $d_7$, but can be easily generalized.

\begin{prop}\label{propbc}
Let $\Gamma_w$ be an infinite carpet graph with an obstruction in the direction $d_7$. Then the sequences $\{B_m\}_{m\in M}$ and $\{C_m\}_{m\in M}$ define distinct Busemann boundary points.
\end{prop}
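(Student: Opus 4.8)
The plan is to prove that both $\{B_m\}_{m\in M}$ and $\{C_m\}_{m\in M}$ are, after a harmless reparametrisation, \emph{geodesic rays} in $\Gamma_w$; since a geodesic ray is in particular an almost-geodesic ray, this gives at once that the two boundary points are Busemann. Concretely, for $m\in M$ put $p_m=d(w,B_m)$ and $q_m=d(w,C_m)$, and define $\gamma_B$ on $T_B=\{0\}\cup\{p_m:m\in M\}$ by $\gamma_B(0)=w$ and $\gamma_B(p_m)=B_m$, and analogously $\gamma_C$ on $T_C$. The heart of the matter is to show that $d(B_m,B_{m'})=|p_m-p_{m'}|$ for all $m,m'\in M$, and likewise for the $C_m$'s. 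Once this is known, $\gamma_B$ and $\gamma_C$ are geodesic rays in the sense of Definition \ref{defigeodesic}(1); and since $B_m=\gamma_B(p_m)$ with $p_m\to+\infty$ (similarly for $C$), they determine the same elements of $\partial\Gamma_w$ as the sequences $\{B_m\}$ and $\{C_m\}$, which are therefore Busemann points (cf. also \cite{webster}).

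The distance computation rests on how the obstruction holes are nested. Keeping the notation of Lemma \ref{lemmauguaglianza}, write $B_m=(h_m,b_2(m))$ and $C_m=(h_m,c_2(m))$, so that $h_m\uparrow+\infty$, $b_2(m)<0<c_2(m)$ and $c_2(m)-b_2(m)=3^{m-2}$ for $m$ large. If $m'<m$ are \emph{consecutive} elements of $M$, then, at the step of the recursive construction at which $H_m$ is created as the central hole of $\overline{\Gamma}$, the sub-copy carrying $H_{m'}$ occupies position $7$ of $\overline{\Gamma}$, i.e. the left--middle cell of the $3\times 3$ pattern; since this cell spans exactly the same rows as the central hole, one obtains $b_2(m)\le b_2(m')<0$ and $0<c_2(m')\le c_2(m)$, while $0<h_{m'}<h_m$. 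Hence $B_{m'}$ lies in the rectangle $[0,h_m]\times[b_2(m),0]$ and $C_{m'}$ in $[0,h_m]\times[0,c_2(m)]$, so each is $\ell^1$-between $w$ and $B_m$ (resp. $C_m$) in $\mathbb{Z}^2$. Arguing now as in the proofs of Lemmas \ref{lemmauguaglianza} and \ref{lemmadicembre} — realising a shortest path by a monotone staircase running along the bottom part of $\Gamma_w$ and then along the bottom sides $s_5^{m'},s_5^{m}$ of the holes (resp. along the top part and along $s_1^{m'},s_1^{m}$) — these graph distances coincide with the corresponding $\ell^1$-distances. Therefore $d(w,B_{m'})+d(B_{m'},B_m)=d(w,B_m)$, i.e. $d(B_m,B_{m'})=p_m-p_{m'}$; the general case $m,m'\in M$ follows by transitivity of this relation, and the same applies to the $C_m$'s.

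It remains to separate the two Busemann points. Let $v_0$ be the neighbour of the root $w$ lying on the vertical coordinate axis, $v_0=(0,\varepsilon)$ with $\varepsilon=\pm 1$ (the vertical neighbour of the root inside the initial copy $C_4=\Gamma_w^1$). Using the distance formulas of the previous step and the fact that $b_2(m)\le -1$ and $c_2(m)\ge 1$ for $m$ large, one finds $\varphi_{v_0}(B_m)=d(B_m,w)-d(B_m,v_0)=-\varepsilon$ and $\varphi_{v_0}(C_m)=d(C_m,w)-d(C_m,v_0)=\varepsilon$, while $\varphi_{w}(B_m)=\varphi_{w}(C_m)=0$. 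Thus the horofunctions $v\mapsto \lim_m\varphi_v(B_m)$ and $v\mapsto \lim_m\varphi_v(C_m)$ agree (at $w$) but take the opposite values $-\varepsilon$ and $\varepsilon$ at $v_0$, so they do not differ by a constant, and the two boundary points are distinct.

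The step I expect to be the main obstacle is the distance identity of the second paragraph, i.e. verifying that the monotone staircases joining $w$ (or $B_{m'}$, $C_{m'}$) to $B_m$, $C_m$ really lie inside $\Gamma_w$, so that no \emph{other} hole forces a detour and the graph distance equals the $\ell^1$-distance. This is exactly the point at which the infinite ramification of the carpet makes the analysis more delicate than in the gasket case \cite{io}, and it has to be handled by a careful bookkeeping of the positions of all holes in the recursive construction; the nesting property of the obstruction holes isolated above is the structural input that makes this bookkeeping tractable.
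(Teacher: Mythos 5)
Your proposal is correct and takes essentially the same approach as the paper: you show that $\{B_m\}_{m\in M}$ and $\{C_m\}_{m\in M}$ are (reparametrised) geodesic rays, which the paper phrases as the extendability of a geodesic from any vertex $v$ to $B_n$ into one to $B_m$ (also leaving the staircase/$\ell^1$ verification as a routine check), and you then separate the two boundary points by evaluating $\varphi$ at the vertical neighbour of the root, exactly as the paper does with $v=(0,1)$ and $-1=\varphi_v(B_m)\neq \varphi_v(C_m)=1$. Your extra bookkeeping on the vertical nesting of consecutive obstruction holes is a welcome justification of the paper's ``it is not difficult to check,'' but it does not change the argument.
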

\begin{proof}
Because of the obstruction the vertices $B_m(b_1(m),b_2(m))$ and $C_m(c_1(m),c_2(m))$ are such that the sequences $b_1(m),b_2(m),c_1(m)$ and $c_2(m)$ are unbounded. It is not difficult to check that, for every vertex $v \in \Gamma_w$ and for all $m\geq n$ large enough, there exists a geodesic ray from $v$ to $B_n$ (resp. from $v$ to $C_n$) which can be extended to a geodesic ray from $v$ to $B_m$ (resp. from $v$ to $C_m$). This implies that the sequence $\{B_m\}_{m\in M}$ (resp. $\{C_m\}_{m\in M}$) gives rise to a Busemann boundary point. In order to show that these two boundary points are distinct, let us choose $v$ to be the vertex adjacent to $w$ with coordinates $(0,1)$, after the embedding into $\mathbb{Z}^2$. Then, it is straightforward to verify that $-1=\varphi_v(B_m)\neq \varphi_v(C_m)=1$, and this completes the proof.
\end{proof}

\begin{prop}\label{propequivalenza}
Let $w$ be such that $\Gamma_w$ has an obstruction in the direction $d_7$. Let $\{z_m(z_1(m), z_2(m))\}_{m\in M}$ be the sequence which is antipodal to $w$ in the direction $d_7$, and let $\{f_m(f_1(m), f_2(m))\}_{m\in M}$ (resp. $\{g_m(g_1(m), g_2(m))\}_{m\in M}$) be another sequence of vertices in $s_7^m$ such that
$$
\lim_{m\to+\infty} \left(f_2(m)-z_2(m)\right)=+\infty \qquad  (\text{resp.}\ \lim_{m\to+\infty} \left(z_2(m)-g_2(m)\right)=+\infty).
$$
Then the sequences $\{f_m\}_{m\in M}$ (resp. $\{g_m\}_{m\in M}$) and $\{C_m\}_{m\in M}$ (resp. $\{B_m\}_{m\in M}$) yield the same boundary point.
\end{prop}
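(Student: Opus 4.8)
The plan is to show that both pairs of sequences determine horofunctions that agree as functions on $\Gamma_w$ (up to the constant that is already quotiented out in the definition of the boundary point). Since a boundary point is an equivalence class of horofunctions, it suffices to prove that the horofunction $g_{\{f_m\}}(x)=\lim_m\big(d(f_m,x)-d(f_m,w)\big)$ coincides with $g_{\{C_m\}}(x)=\lim_m\big(d(C_m,x)-d(C_m,w)\big)$ for every vertex $x\in\Gamma_w$ (and symmetrically with $\{g_m\}$ and $\{B_m\}$). I would first record, exactly as in the proof of Lemma \ref{lemmauguaglianza}, that for $m$ large the relevant coordinates are positive, so distances in $\Gamma_w$ to vertices lying on $s_7^m$ or at $C_m$ are realised by $\ell^1$-geodesics in the ambient $\mathbb{Z}^2$; write $C_m=(c_1(m),c_2(m))$, $f_m=(z_1(m),f_2(m))$ with $z_1(m)=c_1(m)$, and note $c_2(m)-f_2(m)=:\delta_m\ge 0$.

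Next I would compute, for a fixed vertex $x=(x_1,x_2)$ with $x_2\ge 0$ (the bulk of the graph), the two quantities $d(f_m,x)-d(f_m,w)$ and $d(C_m,x)-d(C_m,w)$ using the explicit distance formula already derived in Lemma \ref{lemmadicembre}: a geodesic from such an $x$ to a point on $s_7^m$ goes rightward and upward to $C_m$ and then down the side, giving $d(C_m,x)=(c_1(m)-x_1)+(c_2(m)-x_2)$ and $d(f_m,x)=(c_1(m)-x_1)+(c_2(m)-x_2)+\delta_m$, whereas $d(C_m,w)=c_1(m)+c_2(m)$ and $d(f_m,w)=c_1(m)+c_2(m)$ because the antipodal property of $\{z_m\}$ forces the minimal path from $w$ to any point of $s_7^m$ above $z_m$ to still pass through $C_m$. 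Subtracting, the $\delta_m$-terms cancel in both differences, so
\[
d(f_m,x)-d(f_m,w)=-x_1-x_2=d(C_m,x)-d(C_m,w)
\]
for all $m$ large, hence the two limits agree on the half $\{x_2\ge 0\}$. For $x$ with $x_2<0$ the geodesic from $x$ to $s_7^m$ passes through $B_m$ rather than $C_m$; here I would invoke the hypothesis $f_2(m)-z_2(m)\to+\infty$ together with $z_2(m)-b_2(m)=3^{m-2}$ from Lemma \ref{lemmauguaglianza} to see that the path from $x$ up to $B_m$ and then up the side to $f_m$ has the same length-change as the path to $C_m$ — concretely, for $m$ large the portion of $s_7^m$ between $z_m$ and $f_m$ is traversed "for free" relative to the base point $w$ by the antipodal identity $\overline d=\underline d=d$, so again the $f_m$-dependent terms cancel and $g_{\{f_m\}}(x)=g_{\{C_m\}}(x)$. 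The argument for $\{g_m\}$ versus $\{B_m\}$ is the mirror image, swapping the roles of the upper and lower endpoints of $s_7^m$ and using $z_2(m)-g_2(m)\to+\infty$.

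The main obstacle I anticipate is \emph{uniformity in $m$}: I must be sure that for each fixed $x$ there is a threshold $m_0(x)$ beyond which the combinatorial picture above is exact — that $x$ lies strictly "inside" relative to the hole $H_m$, that $c_1(m),c_2(m)$ and the gap $\delta_m$ are large enough that no shorter path sneaks around the hole on the other side, and that the divergence hypotheses on $f_2(m)-z_2(m)$ (resp. $z_2(m)-g_2(m)$) genuinely kick in. This is where the obstruction hypothesis is essential: it guarantees $b_1(m),b_2(m),c_1(m),c_2(m)$ all tend to infinity (as already used in Proposition \ref{propbc}), so eventually every fixed $x$ is dominated by the hole and the $\ell^1$ computation is valid. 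Once this uniformity is pinned down the cancellation is a one-line calculation; I would present the $x_2\ge 0$ case in a short \texttt{align*} and remark that the case $x_2<0$ is symmetric, exactly as Lemmas \ref{lemmauguaglianza} and \ref{lemmadicembre} handle their analogous dichotomies.
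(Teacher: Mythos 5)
Your overall strategy is the same as the paper's: show that for each fixed vertex the shortest path to $f_m$ eventually routes through $C_m$, so that the common segment $d(C_m,f_m)$ cancels in $d(f_m,\cdot)-d(f_m,w)$ and the horofunction of $\{f_m\}_{m\in M}$ agrees with that of $\{C_m\}_{m\in M}$. Your computation for $x_2\geq 0$ is fine modulo an arithmetic slip: you write $d(f_m,w)=c_1(m)+c_2(m)$, but if the minimal path from $w$ to $f_m$ passes through $C_m$ (as you correctly argue it does) then $d(f_m,w)=c_1(m)+c_2(m)+\delta_m$; only with that correction do the $\delta_m$'s cancel as you intend.

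The genuine gap is the case $x_2<0$. You assert that the geodesic from such an $x$ to $f_m$ passes through $B_m$ and then try to argue that the stretch of $s_7^m$ between $z_m$ and $f_m$ is traversed ``for free''. That cannot work: if the geodesics from $x$ to $f_m$ really did pass through $B_m$ for all large $m$, then $d(f_m,x)-d(f_m,w)$ would reproduce the horofunction of $\{B_m\}_{m\in M}$ at $x$, which is \emph{not} equal to that of $\{C_m\}_{m\in M}$ (this is exactly what Proposition \ref{propbc} exploits to separate the two points), and the proposition would fail. The correct fact --- and the heart of the paper's proof --- is that the hypothesis $f_2(m)-z_2(m)\to+\infty$ forces the geodesic from \emph{every} fixed vertex $v$, including those with negative second coordinate, to pass through $C_m$ for $m$ large. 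The paper gets this uniformly, with no case split and no coordinate formula for $d(f_m,x)$: with $\overline{d}$ and $\underline{d}$ denoting the lengths of the paths through $C_m$ and $B_m$, the triangle inequalities $\overline{d}(v,f_m)\leq d(v,w)+\overline{d}(w,f_m)$ and $\underline{d}(v,f_m)\geq \underline{d}(w,f_m)-d(v,w)$, combined with the antipodal identity $\overline{d}(w,z_m)=\underline{d}(w,z_m)$, give $\underline{d}(v,f_m)-\overline{d}(v,f_m)\geq 2d(z_m,f_m)-2d(v,w)>0$ eventually. You should replace your $x_2<0$ paragraph with this estimate; it also subsumes your $x_2\geq 0$ computation and disposes of the uniformity-in-$m$ worry you raise at the end.
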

\begin{proof}
Let $T=M$ and $\gamma(m)=f_m$, $\gamma'(m)=C_m$, for each $m\in M$. The claim follows if we prove that, for every $v\in \Gamma_w$, the values $ \varphi_v(\gamma(m))$ and $ \varphi_v(\gamma'(m))$ eventually coincide.  The proof for the sequences $\{g_m\}_{m\in M}$ and $\{B_m\}_{m\in M}$ is analogous and left to the reader.
Notice that, if eventually $\overline{d}(w,f_m)=d(w,f_m)$ holds, then the claim easily follows by observing that
\begin{eqnarray*}
\varphi_v(\gamma(m))&=& d(\gamma(m),w) - d(\gamma(m),v)\\
&=& (d(w, C_m)+d(C_m, f_m)) - (d(v, C_m)+d(C_m, f_m)) \\
&=& d(\gamma'(m),w) - d(\gamma'(m),v)\\
&=&\varphi_v(\gamma'(m)).
\end{eqnarray*}
By playing with the triangular inequalities, we have, for every vertex $v$:
\begin{eqnarray*}
\overline{d}(v,f_m)\leq d(v, w)+ \overline{d}(w,f_m),
\end{eqnarray*}
and
\begin{eqnarray*}
\underline{d}(v,f_m)\geq -d(v, w)+ \underline{d}(w,f_m).
\end{eqnarray*}
From this and the fact that $\underline{d}(w,z_m)=\overline{d}(w,z_m)$, we get
\begin{eqnarray*}
\underline{d}(v,f_m)-\overline{d}(v,f_m)&\geq& \underline{d}(w,f_m)- \overline{d}(w,f_m)-2d(v, w)\\
&=& \underline{d}(w,z_m)+ d(z_m,f_m) - \overline{d}(w,z_m)+ d(z_m,f_m)- 2d(v, w) \\
&=& 2 d(z_m,f_m)-2d(v, w) >0
\end{eqnarray*}
for $m$ large enough, since $d(z_m,f_m) = f_2(m)-z_2(m) \to +\infty$ as $m\to+\infty$. This means $\overline{d}(v,f_m)=d(v,f_m)$ and concludes the proof.
\end{proof}

By using a similar argument as in the proof of Lemma \ref{lemmadicembre}, it is possible to show that the sequences $\{f_m\}_{m\in M}$ and $\{g_m\}_{m\in M}$, described in the Proposition \ref{propequivalenza}, form weakly-geodesic rays in $\Gamma_w$.

\begin{cor}\label{propnonequiv}
Let $\{z_m\}_{m\in M}$, $\{f_m\}_{m\in M}$ and $\{g_m\}_{m\in M}$ be sequences of vertices of $\Gamma_w$ as in Proposition \ref{propequivalenza}. Then the corresponding boundary points are distinct.
\end{cor}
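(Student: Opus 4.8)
The plan is to show pairwise that no two of the three boundary points determined by $\{z_m\}_{m\in M}$, $\{f_m\}_{m\in M}$, $\{g_m\}_{m\in M}$ can coincide, by exhibiting, for each pair, a single test vertex $v\in\Gamma_w$ on which the horofunctions differ in the limit. Since all vertex distances are integers, it suffices to find $v$ such that $\varphi_v$ takes eventually distinct values along the two sequences. First I would dispose of the pair $\{f_m\}$ versus $\{g_m\}$: by Proposition \ref{propequivalenza}, $\{f_m\}$ yields the same boundary point as $\{C_m\}$ and $\{g_m\}$ yields the same boundary point as $\{B_m\}$, so this pair is already handled by Proposition \ref{propbc}, where the test vertex $v=(0,1)$ gives $\varphi_v(C_m)=1\neq-1=\varphi_v(B_m)$ for $m$ large. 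Hence it remains only to separate the antipodal point $\{z_m\}$ from each of $\{f_m\}\sim\{C_m\}$ and $\{g_m\}\sim\{B_m\}$.

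For the pair $\{z_m\}$ versus $\{C_m\}$, I would again take $v=\textbf{v}_2(0,-1)$, the vertex just below the root. For $m$ large the geodesic from $\textbf{v}_2$ to $C_m$ (which lies strictly above the root) runs upward and then rightward, so $d(\textbf{v}_2,C_m)=c_1(m)+c_2(m)+1$, whereas $d(w,C_m)=c_1(m)+c_2(m)$; thus $\varphi_{\textbf{v}_2}(C_m)=-1$ eventually. On the other hand, by the choice of $z_m$ in Lemma \ref{lemmauguaglianza} a minimal path from $w$ to $z_m$ may be routed through $B_m$ (which lies below the hole), and correspondingly a minimal path from $\textbf{v}_2$ to $z_m$ can be routed through $B_m$ as well, giving $d(\textbf{v}_2,z_m)=d(w,z_m)-1$, so $\varphi_{\textbf{v}_2}(z_m)=+1$ eventually. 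Hence $\varphi_{\textbf{v}_2}$ separates the two points. Symmetrically, for $\{z_m\}$ versus $\{B_m\}$ I would use $v=\textbf{v}_1(0,1)$: here $d(\textbf{v}_1,B_m)=d(w,B_m)+1$ gives $\varphi_{\textbf{v}_1}(B_m)=-1$, while routing the minimal path from $\textbf{v}_1$ to $z_m$ through $C_m$ (above the hole) gives $d(\textbf{v}_1,z_m)=d(w,z_m)-1$ and $\varphi_{\textbf{v}_1}(z_m)=+1$, again a separation.

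The main obstacle is the bookkeeping for the test vertex $\textbf{v}_2$ (resp. $\textbf{v}_1$) in the $z_m$-versus-$C_m$ (resp. $z_m$-versus-$B_m$) comparison: one must confirm that pushing the base point one step downward (resp. upward) really shifts the minimal-path length by exactly $\pm1$ and in the intended direction for \emph{both} endpoints simultaneously, using that $B_m$ sits strictly below and $C_m$ strictly above the root for $m$ large (as established in the proof of Lemma \ref{lemmauguaglianza}), and that $z_m$ admits minimal paths on \emph{both} sides of the hole. This is the same routing argument already used in Lemma \ref{lemmauguaglianza}, Lemma \ref{lemmadicembre}, and Proposition \ref{propbc}, so once those are in hand the verification is routine; the only genuinely new input is combining them to rule out \emph{all} three coincidences at once, which the three explicit test vertices $\textbf{v}_1$, $\textbf{v}_2$, and $(0,1)$ accomplish.
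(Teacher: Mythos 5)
Your proposal is correct and follows essentially the same route as the paper: reduce the $\{f_m\}$ versus $\{g_m\}$ case to Proposition \ref{propbc} via Proposition \ref{propequivalenza}, then separate $\{z_m\}$ from $\{C_m\}$ with the test vertex $(0,-1)$ (obtaining $\varphi_{(0,-1)}(C_m)=-1\neq 1=\varphi_{(0,-1)}(z_m)$) and symmetrically from $\{B_m\}$ with $(0,1)$. The paper states the second separation only as ``analogously,'' so your explicit treatment of it is a slight expansion but not a different argument.
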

\begin{proof}
Since, by Proposition \ref{propequivalenza}, the sequences $\{f_m\}_{m\in M}$ and $\{g_m\}_{m\in M}$ are equivalent to the sequences $\{C_m\}_{m\in M}$ and $\{B_m\}_{m\in M}$, respectively, it is enough to show the claim for $\{z_m\}_{m\in M}$, $\{C_m\}_{m\in M}$ and $\{B_m\}_{m\in M}$. Let $z$, $f$ and $g$ be the corresponding boundary points, respectively. Then it follows from Proposition \ref{propbc} that $f\neq g$. Now let $\textbf{v}'=(0,-1)$. Then it is easy to check that $-1=\varphi_{\textbf{v}'}(C_m)\neq \varphi_{\textbf{v}'}(z_m)=1$, so that $z\neq f$. Analogously, one can prove that $z\neq g$ and we have proved the claim.
\end{proof}

\begin{remark}\rm
It follows from Proposition \ref{propbc} and Proposition \ref{propequivalenza} that the sequences $\{f_m\}_{m\in M}$ and $\{g_m\}_{m\in M}$ determine Busemann points of $\partial \Gamma_w$. On the other hand, we will prove in Proposition \ref{propbus} that, if the sequence $\{f_m\}_{m\in M}$ (resp. $\{g_m\}_{m\in M}$) is opportunely chosen, it is not an almost-geodesic sequence, even if the corresponding boundary point is Busemann.
\end{remark}

\begin{prop}\label{propbus}
Let $\{f_m\}_{m\in M}$ (resp. $\{g_m\}_{m\in M}$) as before, with the further property that $d(C_m, f_m)\to +\infty$ (resp. $d(B_m, g_m)\to +\infty$) as $m\to+\infty$. Then $\gamma:T\subseteq \mathbb{N}\to \Gamma_w$ such that $\gamma(t_i)=f_i$ defines no almost-geodesic, for every choice of the $t_i$'s.
\end{prop}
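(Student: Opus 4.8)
The plan is to argue by contradiction: I would first turn the abstract almost‑geodesic condition into a concrete statement saying that a fixed vertex $f_{j_0}$ lies on a geodesic joining two corner points $C_{j_0}$ and $C_i$ for arbitrarily large $i$, and then contradict this via the embedding $\Gamma_w\hookrightarrow\mathbb Z^2$.

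So assume that, for some choice of the $t_i$'s, the map $\gamma$ with $\gamma(t_i)=f_i$ and $\gamma(0)$ a fixed vertex $v_0$ is an almost‑geodesic ray (Definition \ref{defigeodesic}(2)). The points $f_i$ are pairwise distinct (they have distinct first coordinates $h_i$ in the embedding), hence $i\mapsto t_i$ is injective and only finitely many $t_i$ lie below any prescribed bound. Taking $t=s$ in the defining inequality and using $T\subseteq\mathbb N$ gives $t_i=d(v_0,f_i)$ for all large $i\in M$; feeding this back, with $\varepsilon<\tfrac12$ so that integrality forces equality, we get for all large indices $i>j$ of $M$
\[
d(v_0,f_j)+d(f_j,f_i)=d(v_0,f_i),
\]
i.e. $f_j$ lies on a geodesic from $v_0$ to $f_i$. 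Fix now one index $j_0\in M$ large enough that $\overline d(v_0,f_{j_0})=d(v_0,f_{j_0})$ and $d(C_{j_0},f_{j_0})\ge1$ (the latter is possible since $d(C_m,f_m)\to+\infty$), and let $i\in M$, $i>j_0$, tend to $+\infty$.

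The crucial step is to re‑express the relation above purely in terms of the corner points. The computation in the proof of Proposition \ref{propequivalenza} shows that for each \emph{fixed} vertex $v$ and each large hole index $m$ one has $\overline d(v,f_m)=d(v,f_m)$, i.e. $d(v,f_m)=d(v,C_m)+d(C_m,f_m)$. Using this with $v=v_0$ and with $v=f_{j_0}$, both applied to the index $i$, the displayed identity becomes $d(v_0,f_{j_0})+d(f_{j_0},C_i)=d(v_0,C_i)$; using it once more with $v=v_0$ applied to the index $j_0$, and then Proposition \ref{propbc} in the form $d(v_0,C_i)=d(v_0,C_{j_0})+d(C_{j_0},C_i)$ (geodesics to $C_{j_0}$ extend to geodesics to $C_i$), all the $v_0$‑terms cancel and we obtain
\[
d(C_{j_0},f_{j_0})+d(f_{j_0},C_i)=d(C_{j_0},C_i)\qquad\text{for all large }i\in M,
\]
that is, $f_{j_0}$ would have to lie on a geodesic joining $C_{j_0}$ to $C_i$.

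Finally I would rule this out through the embedding. Writing $C_m=(c_1(m),c_2(m))$ and noting that $f_{j_0}\in s_7^{j_0}$ has the form $(c_1(j_0),f_2(j_0))$ with $f_2(j_0)<c_2(j_0)$, while $c_1(i)>c_1(j_0)$ for $i>j_0$, one has $d(C_{j_0},f_{j_0})=c_2(j_0)-f_2(j_0)$ (the side $s_7^{j_0}$ is a geodesic segment) and, since $\Gamma_w$ is a subgraph of $\mathbb Z^2$,
\[
d(f_{j_0},C_i)\ \ge\ \bigl(c_1(i)-c_1(j_0)\bigr)+\bigl(c_2(i)-f_2(j_0)\bigr).
\]
On the other hand, Proposition \ref{propbc} together with the formula $d(w,C_m)=c_1(m)+c_2(m)$ of Lemma \ref{lemmauguaglianza} gives $d(C_{j_0},C_i)=d(w,C_i)-d(w,C_{j_0})=\bigl(c_1(i)-c_1(j_0)\bigr)+\bigl(c_2(i)-c_2(j_0)\bigr)$. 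Subtracting, the left side of the previous display exceeds $d(C_{j_0},C_i)$ by at least $2\bigl(c_2(j_0)-f_2(j_0)\bigr)\ge2>0$, which contradicts that identity. Hence no choice of the $t_i$'s makes $\gamma$ almost‑geodesic; the case of $\{g_m\}$ is symmetric, with $C_m$ and $\overline d$ replaced by $B_m$ and $\underline d$. The only genuinely non‑routine point is the middle step — that the ``over‑the‑top'' identities of Proposition \ref{propequivalenza} allow one to strip $f$'s off \emph{both} endpoints and reduce the whole question to the corner sequence — together with the bookkeeping of the quantifiers there, since $\overline d(v,f_m)=d(v,f_m)$ holds only for fixed $v$ and large $m$; once the statement is about the $C_m$, the $\mathbb Z^2$‑geometry finishes it at once.
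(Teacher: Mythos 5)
Your argument is correct, but it reaches the conclusion by a genuinely different route than the paper. The paper attacks the almost-geodesic condition head-on: it takes $\gamma(0)=w$, expands the defect $L=|d(f_i,f_j)+d(f_i,w)-t_j|$ in the coordinates of the $\mathbb{Z}^2$-embedding to get $L=|2c_2(j)-f_2(j)+f_1(j)-t_j+2(c_2(i)-f_2(i))|$, and notes that the summand $2(c_2(i)-f_2(i))=2\,d(C_i,f_i)$ diverges while the remaining block depends only on $j$, so $L$ cannot stay below $\varepsilon$ for all admissible pairs (strictly, one must compare two indices $i_1<i_2\le j$, a step the paper leaves implicit). You instead use integrality to upgrade the almost-geodesic condition to the exact identities $t_i=d(v_0,f_i)$ and $d(v_0,f_{j_0})+d(f_{j_0},f_i)=d(v_0,f_i)$, strip the common ``over-the-top'' segments off both endpoints via the identity $d(v,f_m)=d(v,C_m)+d(C_m,f_m)$ established in the proof of Proposition \ref{propequivalenza}, and reduce everything to the betweenness relation $d(C_{j_0},f_{j_0})+d(f_{j_0},C_i)=d(C_{j_0},C_i)$, which the $\ell^1$ lower bound from the embedding refutes by the excess $2\,d(C_{j_0},f_{j_0})\ge 2$. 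The geometric obstruction is the same in both proofs --- the wasted round trip of length $2\,d(C_m,f_m)$ between $C_m$ and $f_m$ --- but your version buys a cleaner handling of the quantifiers and works for an arbitrary base point $\gamma(0)=v_0$ rather than only $w$, at the price of additionally invoking the geodesic-extension property from Proposition \ref{propbc} and the monotonicity of the corner coordinates along the obstruction; the paper's computation is shorter but rests on an unjustified explicit formula for $d(f_i,f_j)$ and a terser final step. Both are valid.
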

\begin{proof}
We give the proof only in the case of the sequence $\{f_m\}_{m\in M}$. The proof for the sequence $\{g_m\}_{m\in M}$ is analogous and left to the reader. For every $j\geq i$ large enough, we put $L:=|d(\gamma(t_i),\gamma(t_j))+d(\gamma(t_i), w)-t_j|$. Then, by performing explicit computations, one can check that:
\begin{eqnarray*}
L&=&|d(f_i,f_j)+d(f_i,w)-t_j| = | 2c_2(j) -f_2(j) + 2 c_2(i)- 2f_2(i) + f_1(j) -t_j|.
\end{eqnarray*}
Since $|2 c_2(i)- 2f_2(i)|\to+\infty$ as $i\to +\infty$, the value of $L$ cannot be uniformly bounded for every $i$. On the other hand, it should be $L<\varepsilon$ in order to have an almost-geodesic ray.
\end{proof}

By collecting the previous results, we deduce the following corollary for a graph $\Gamma_w$ with an obstruction in the direction $d_i$, for some $i\in\{1,3,5,7\}$.

\begin{cor}\label{infinitinonbus}
If $\Gamma_w$ has an obstruction in the direction $d_i$, then $\partial \Gamma_w$ contains countably many non-Busemann point $\{\zeta_k^{(i)}\}_{k\in \mathbb{Z}}$ given by the sequences at bounded distance from the corresponding antipodal sequence.
\end{cor}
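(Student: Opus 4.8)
The plan is to produce, for each $k\in\mathbb Z$, a boundary point $\zeta_k^{(i)}$ of $\partial\Gamma_w$ defined by a sequence lying at distance $|k|$ from the antipodal sequence, to prove that each $\zeta_k^{(i)}$ is non-Busemann by imitating Proposition \ref{propnonbusemann}, and to distinguish the $\zeta_k^{(i)}$ from one another. As in the rest of this subsection we argue in the running case of an obstruction in $d_7$, so $i=7$; the three other directions follow by relabelling. Let $\{z_m\}_{m\in M}$ be the antipodal sequence of Lemma \ref{lemmauguaglianza}, written $z_m(z,z_2)$ in the embedding into $\mathbb Z^2$, and set $z_m^{(k)}(z,z_2+k)$. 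Because of the obstruction, $d(z_m,C_m)$ and $d(z_m,B_m)$ tend to infinity with $m$, so $z_m^{(k)}\in s_7^m$ for all $m\in M$ large enough, and $z_m^{(0)}=z_m$.

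First I would check that $\{z_m^{(k)}\}_{m\in M}$ is a weakly-geodesic ray, hence defines a point $\zeta_k^{(7)}\in\partial\Gamma_w$. Working in the $\mathbb Z^2$-embedding exactly as in Lemma \ref{lemmadicembre} one obtains $\overline{d}(w,z_m^{(k)})=\overline{d}(w,z_m)-k$ and $\underline{d}(w,z_m^{(k)})=\underline{d}(w,z_m)+k$, hence $d(w,z_m^{(k)})=d(w,z_m)-|k|$; moreover, for each fixed vertex $y$ a minimal path from $y$ to $z_m^{(k)}$ runs, for all $m$ large, through a corner of $H_m$ that does not depend on $m$ (through $B_m$ or $C_m$ according to the position of $y$), and one checks as in Lemma \ref{lemmadicembre} that $d(y,z_m^{(k)})-d(w,z_m^{(k)})$ is then eventually constant in $m$; Property $(3)$ of Definition \ref{defigeodesic} follows. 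To prove that $\zeta_k^{(7)}$ is not a Busemann point I would rerun the argument of Proposition \ref{propnonbusemann}; the only new ingredient is the choice of the two base vertices. Pick $\textbf{v}_1$ far along the diagonal growth direction $d_{7,1}$ and $\textbf{v}_2$ far along $d_{5,7}$ — such vertices exist in $\Gamma_w$ because the obstruction in $d_7$ forces diagonal infinite growth in $d_{7,1}$ and $d_{5,7}$ (Remark \ref{13gennaio}) — taken far enough that, for all $m$ large, a minimal path from $\textbf{v}_1$ to $z_m^{(k)}$ runs through $C_m$ and one from $\textbf{v}_2$ runs through $B_m$. These two minimal paths then approach $z_m^{(k)}$ from opposite ends of the side $s_7^m$ and meet only at $z_m^{(k)}$, so the contradiction of Proposition \ref{propnonbusemann} is reached verbatim.

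For the pairwise distinctness — the heart of the statement — I would keep the same $\textbf{v}_1,\textbf{v}_2$. From the distance identities above, $d(\textbf{v}_1,z_m^{(k)})=d(\textbf{v}_1,z_m)-k$ and, symmetrically, $d(\textbf{v}_2,z_m^{(k)})=d(\textbf{v}_2,z_m)+k$ for all $m$ large. Since $d(\textbf{v}_1,z_m^{(k)})-d(\textbf{v}_2,z_m^{(k)})$ stabilizes, as $m\to\infty$, to $g^{(k)}(\textbf{v}_1)-g^{(k)}(\textbf{v}_2)$, a quantity that depends only on the boundary point $\zeta_k^{(7)}$ (being a difference of two values of its horofunction), we get
\begin{eqnarray*}
g^{(k)}(\textbf{v}_1)-g^{(k)}(\textbf{v}_2) &=& \big(d(\textbf{v}_1,z_m)-d(\textbf{v}_2,z_m)\big)-2k,
\end{eqnarray*}
where the bracketed term is eventually constant in $m$; this is a strictly decreasing function of $k$. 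Hence $\zeta_k^{(7)}\neq\zeta_{k'}^{(7)}$ whenever $k\neq k'$, and $\{\zeta_k^{(7)}\}_{k\in\mathbb Z}$ is a countably infinite family of non-Busemann points of $\partial\Gamma_w$.

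It remains to observe that these are precisely the boundary points produced by sequences at bounded distance from the antipodal one: by Proposition \ref{propequivalenza} and Corollary \ref{propnonequiv}, a sequence of vertices of $s_7^m$ at \emph{unbounded} distance from $\{z_m\}_{m\in M}$ determines one of the Busemann points attached to $\{B_m\}_{m\in M}$ and $\{C_m\}_{m\in M}$, whereas a convergent sequence at bounded distance from $\{z_m\}_{m\in M}$ must, along the relevant subsequence, eventually coincide with $\{z_m^{(k)}\}_{m\in M}$ for a single $k$. Relabelling then yields the general direction $d_i$ and completes the argument. The step I expect to be the main obstacle is the distinctness computation: it requires pinning down, through the $\mathbb Z^2$-embedding, which corner ($B_m$ or $C_m$) is crossed by a minimal path from $\textbf{v}_1$ or $\textbf{v}_2$ to $z_m^{(k)}$, and then reading off the exact separating offset $-2k$ from the resulting horofunctions.
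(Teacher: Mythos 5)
Your proposal is correct and follows essentially the same route as the paper: the same shifted sequences $z_m^{(k)}$ obtained by translating the $y$-coordinate of the antipodal sequence, non-Busemann-ness by rerunning the argument of Proposition \ref{propnonbusemann}, and pairwise distinctness by evaluating horofunctions at suitable test vertices (the paper uses a single vertex $v_k=(0,-k)$ giving $\varphi_{v_k}(z_m)=k\neq -k=\varphi_{v_k}(z_m^{(k)})$, while you use the difference of values at two fixed vertices, yielding the offset $-2k$ --- the same mechanism). Your extra care about choosing $\textbf{v}_1,\textbf{v}_2$ far enough along the diagonal directions so that the two minimal paths to $z_m^{(k)}$ still pass through opposite corners for $k\neq 0$ is a detail the paper glosses over, and it is a genuine (and correctly handled) point.
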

\begin{proof}
Suppose that the obstruction is in direction $d_7$ (the other cases are analogous). The fact that the sequences at bounded distance from $\{z_m\}_{m\in M}$ define non-Busemann points follows from the proof of Proposition \ref{propnonbusemann}. Let $\{z_m^{(k)}\}_{m\in M}$ be the sequence obtained from $\{z_m\}_{m\in M}$ by shifting the $y$-coordinate by an integer number $k$: i.e., with the above notations, $z_1^{(k)}(m)=z_1(m)$ and $z_2^{(k)}(m)=z_2(m)+k$. The sequence $\{z_m^{(k)}\}_{m\in M}$ is not equivalent to the sequence
$\{B_m\}_{m\in M}$ and $\{C_m\}_{m\in M}$, as can be easily deduced by applying a similar argument as in the proof of Corollary \ref{propnonequiv}. Moreover, for any $k\neq 0$, $\{z_m\}_{m\in M}$ and $\{z_m^{(k)}\}_{m\in M}$ give rise to distinct boundary points. To prove that, it is enough to compare the values of $\varphi_{v_k}(z_m)=k$ and $\varphi_{v_k}(z_m^{(k)})=-k$, where $v_k=(0,-k)$ after the standard embedding into $\mathbb{Z}^2$. Similarly, one can prove that the sequences $\{z_m^{(k)}\}_{m\in M}$ and $\{z_m^{(h)}\}_{m\in M}$, with $h\neq k$, correspond to distinct boundary points.
\end{proof}

The following proposition describes Busemann points associated with diagonal infinite directions in $\Gamma_w$, and it can be considered a natural extension of Proposition \ref{propbc}. We give the assert and the proof in the particular case of diagonal infinite growth in the direction $d_{7,1}$. However, it can be easily generalized to the other diagonal infinite directions.

\begin{prop}\label{propdiagonale}
Let $\Gamma_w$ have diagonal infinite growth in the direction $d_{7,1}$. Then all the unbounded sequences consisting of vertices of type $\{a_k {\bf e}_1 + b_k{\bf e}_2 : a_k,b_k\in \mathbb{N}; a_k,b_k\to +\infty \textrm{ as } k \to +\infty\}$ give rise to the same Busemann boundary point in $\partial\Gamma_w$. In particular, if $\Gamma_w$ has an obstruction in the direction $d_7$, then $\partial\Gamma_w$ contains two Busemann points corresponding to any unbounded sequence of vertices of type $\{t_k {\bf e}_1 + s_k{\bf e}_2 : t_k,s_k\in \mathbb{N}; t_k,s_k\to +\infty \textrm{ as } k \to +\infty\}$ and $\{p_k {\bf e}_1 - q_k{\bf e}_2 : p_k,q_k\in \mathbb{N}; p_k,q_k\to +\infty \textrm{ as } k \to +\infty\}$.
\end{prop}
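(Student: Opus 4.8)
The plan is to reduce the whole statement to a single geometric fact about monotone lattice paths in $\Gamma_w$, and then read off from it successively the boundary point, its uniqueness, the Busemann property, and — in the obstruction case — the identification with the points carried by $\{B_m\}_{m\in M}$ and $\{C_m\}_{m\in M}$. Throughout I use the embedding of $\Gamma_w$ into $\mathbb{Z}^2$ fixed at the beginning of this section, so that $d(w,v)=v_1+v_2$ for every vertex $v=(v_1,v_2)$ of $\Gamma_w$ lying in the closed first quadrant (already observed, for $v=w$, in the proof of Lemma \ref{lemmauguaglianza}). The key — and \emph{the hard part} — is the following claim: for every vertex $y=(y_1,y_2)$ of $\Gamma_w$ there is a radius $R(y)$ such that for every vertex $v=(v_1,v_2)$ of $\Gamma_w$ with $v_1\ge y_1$, $v_2\ge y_2$ and $v_1+v_2\ge R(y)$ the graph $\Gamma_w$ contains a monotone north-east lattice path from $y$ to $v$, and hence $d(y,v)=(v_1-y_1)+(v_2-y_2)$. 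The idea is that whenever a monotone north-east staircase would enter the interior of a hole, it can be rerouted around that hole along two of its sides — its bottom side followed by its right side, or its left side followed by its top side — both of which are paths of edges of $\Gamma_w$ and both monotone north-east; since $v$ lies far to the north-east, there is always room for such a rerouting, and iterating over the (locally finitely many) holes crossed by the path yields a monotone path of the same $\ell^1$-length. The delicate point is to carry this out simultaneously for holes of all scales, in particular when $y$ lies in a sub-cell that some large hole ``pinches'' (a bottom-middle copy facing a top-middle copy); here one uses the recursive construction of $\Gamma_w$ and the fact that its holes are centered squares, together with the distance bookkeeping of Lemma \ref{lemmauguaglianza}.

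Granting this claim, let $\{v_k\}=\{a_k{\bf e}_1+b_k{\bf e}_2\}$ with $a_k,b_k\to+\infty$. For every fixed vertex $y$ and all $k$ large one has $a_k\ge y_1$, $b_k\ge y_2$ and $a_k+b_k\ge R(y)$, hence
\[
\varphi_y(v_k)=d(v_k,w)-d(v_k,y)=(a_k+b_k)-\bigl((a_k-y_1)+(b_k-y_2)\bigr)=y_1+y_2 .
\]
Thus $\varphi_y(v_k)$ is eventually constant for every $y$. Reparametrizing $\{v_k\}$ by $t=d(w,v_k)$ along a strictly increasing subsequence, Property (3) of Definition \ref{defigeodesic} is satisfied (exactly as in Lemma \ref{lemmadicembre}), so $\{v_k\}$ is a weakly-geodesic ray and its horofunction is $g_u(y)=-(y_1+y_2)$, which does not depend on the chosen sequence. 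Hence all sequences of the prescribed type determine one and the same boundary point $u$.

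For the Busemann property, use the diagonal-growth hypothesis to pick vertices $u_k$ of $\Gamma_w$ with both coordinates tending to $+\infty$; passing to a subsequence we may assume $u_1\le u_2\le\cdots$ coordinatewise and the terms so spread out that the claim above applies to $(w,u_1)$ and to each pair $(u_k,u_{k+1})$. Concatenating monotone north-east paths $w\to u_1\to u_2\to\cdots$ and telescoping $\ell^1$-lengths gives, for each $k$, a path from $w$ to $u_k$ of length $\|u_k\|_1=d(w,u_k)$; hence the infinite concatenation is a geodesic ray $\theta$ in $\Gamma_w$ which is monotone north-east with both coordinates tending to $+\infty$. By the previous paragraph $\theta$ determines $u$, and being a geodesic ray it is almost-geodesic; therefore $u$ is a Busemann point.

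Finally, if $\Gamma_w$ has an obstruction in $d_7$, then by Remark \ref{13gennaio} it has diagonal infinite growth in $d_{7,1}$ and in $d_{5,7}$, so the argument above — and its mirror image in $d_{5,7}$ — produces two Busemann points $u^{(7,1)}$ and $u^{(5,7)}$. By the proof of Proposition \ref{propbc} the coordinates of $C_m=(c_1(m),c_2(m))$ and $B_m=(b_1(m),b_2(m))$ are all unbounded with $c_2(m)>0>b_2(m)$ for $m$ large, so suitable subsequences of $\{C_m\}_{m\in M}$ and $\{B_m\}_{m\in M}$ are of the types $t_k{\bf e}_1+s_k{\bf e}_2$ and $p_k{\bf e}_1-q_k{\bf e}_2$ respectively, with all parameters tending to $+\infty$; hence $\{C_m\}_{m\in M}$ converges to $u^{(7,1)}$ and $\{B_m\}_{m\in M}$ to $u^{(5,7)}$. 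That $u^{(7,1)}\ne u^{(5,7)}$ follows by evaluating their horofunctions $-(y_1+y_2)$ and $-(y_1-y_2)$ at the vertex $(0,1)$, getting $-1\ne 1$ — precisely the computation already made in Proposition \ref{propbc}. In the absence of an obstruction only the first part of the argument, applied in the single relevant diagonal direction, is needed. The main obstacle throughout is the geometric claim of the first paragraph, i.e. controlling geodesics around the fractal family of holes so as to guarantee monotone paths to far points.
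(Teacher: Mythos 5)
Your overall strategy coincides with the paper's: show that $\varphi_y$ is eventually equal to $y_1+y_2$ along every diagonal sequence, deduce that all such sequences define one boundary point, exhibit a monotone geodesic ray to get the Busemann property, and in the obstruction case combine Remark \ref{13gennaio} with the evaluation at $(0,1)$ already used in Proposition \ref{propbc}. All of that downstream reasoning is sound. The problem is the geometric claim you yourself flag as ``the hard part'', and it is not merely delicate: as stated it is false. Take $\Gamma_w$ with an obstruction in the direction $d_7$ (e.g.\ $w=b7^\infty$), let $H_m$ be the obstructing holes with corners $B_m(b_1(m),b_2(m))$ and $C_m(c_1(m),c_2(m))$, where $b_1(m)=c_1(m)\to+\infty$ and $b_2(m)<0<c_2(m)$ as in Lemma \ref{lemmauguaglianza}, and set $v_m=C_m-{\bf e}_2=(c_1(m),c_2(m)-1)$, a vertex of $s_7^m$. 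Both coordinates of $v_m$ are positive and tend to $+\infty$, so with $y=w$ your hypotheses $v_1\ge y_1$, $v_2\ge y_2$, $v_1+v_2\ge R(y)$ are eventually met; yet any monotone north-east path from $w$ to $v_m$ would have to cross the columns of $H_m$ at a height lying in $[0,c_2(m)-1]\subset(b_2(m),c_2(m))$, i.e.\ through the open hole. Hence no such path exists, and in fact $d(w,v_m)=c_1(m)+c_2(m)+1=(v_1+v_2)+2$, so no radius $R(w)$ can work. Your proposed rerouting (``bottom side then right side, or left side then top side'') is monotone towards $v$ only when $v$ lies beyond the far corner of the hole, which is exactly what fails here at every scale. (For this particular $v_m$ the identity $\varphi_y(v_m)=y_1+y_2$ happens to survive, because the detour through $C_m$ adds the same $+2$ to $d(w,v_m)$ and to $d(y,v_m)$; but that cancellation is precisely what a correct proof must establish, and your reduction to monotone paths does not deliver it.)

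The gap cannot be closed just by enlarging $R(y)$ or by reformulating the claim directly in terms of $\varphi_y$, because for suitable words $w$ (those keeping the root in the lower part of the vertical extent of the obstructing holes, so that $z_2(m)=b_2(m)+c_2(m)\to+\infty$) the antipodal sequence $\{z_m\}_{m\in M}$ of Lemma \ref{lemmauguaglianza} is itself an unbounded sequence of vertices $a_k{\bf e}_1+b_k{\bf e}_2$ with $a_k,b_k\to+\infty$, and by Proposition \ref{propnonbusemann} and Corollary \ref{propnonequiv} it determines a non-Busemann point with $\varphi_{(0,-1)}(z_m)=1\neq -1$. So any argument for Proposition \ref{propdiagonale} must separate the genuinely diagonal sequences from those that keep hitting the sides of arbitrarily large holes, and your single uniform claim erases exactly that distinction. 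Until that case analysis is supplied (for instance, by proving that for every fixed $y$ and every admissible $v$ far to the north-east the geodesics from $w$ and from $y$ to $v$ can be routed through a common corner vertex $P$ with $d(w,P)-d(y,P)=y_1+y_2$, and by saying precisely which $v$ are admissible), the proof has a genuine gap. For what it is worth, the paper's own justification --- that geodesics from $w$ to the vertices $a_k{\bf e}_1+b_k{\bf e}_2$ are geodesics of $\mathbb{Z}^2$ --- is the same assertion in compressed form and is contradicted by the same examples.
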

\begin{proof}
The proof follows by observing that a geodesic path from $w$ to $a_k\textbf{e}_1+b_k\textbf{e}_2$, for $k$ large enough, can be seen as a geodesic path in $\mathbb{Z}^2$ after the usual embedding of $\Gamma_w$. The uniqueness can be proven by considering vertices $a_k\textbf{e}_1+b_k\textbf{e}_2$ and $a_h\textbf{e}_1+b_h\textbf{e}_2$ with $k,h$ large enough, and by checking that, for every choice of $v(v_1,v_2)\in \Gamma_w$, one has:
$$
\varphi_v(a_k\textbf{e}_1+b_k\textbf{e}_2) = \varphi_v(a_h\textbf{e}_1+b_h\textbf{e}_2) = v_1+v_2.
$$
The second claim follows from the fact that, if $\Gamma_w$ has an obstruction in the direction $d_7$, then it has diagonal infinite growth in the directions $d_{5,7}$ and $d_{7,1}$, as we have already observed in Remark \ref{13gennaio}.
\end{proof}

For every diagonal direction $d_{i,i+2}$, we will refer to the Busemann points of Proposition \ref{propdiagonale} as the (unique) \textit{diagonal} Busemann points $\beta_{i,i+2}$ in the direction $d_{i,i+2}$, where the sum $i+2$ must be taken modulo $8$, as usual.

\begin{remark}\label{remarkdiagonale}     \rm
It is worth mentioning here that, if $\Gamma_w$ has an obstruction in direction $d_1$ and $d_7$, then all the sequences of vertices $\{v_n(x_n,y_n)\}_{n\in \mathbb{N}}$ in $\Gamma_w$ with $x_n, y_n\to +\infty$ give rise to the same point of $\partial \Gamma_w$. For example, if $M,M'$ are the subsets of $\mathbb{N}$ corresponding to the obstructions $d_1$ and $d_7$, respectively, then $\{C_m\}_{m\in M}$ and $\{C_{m'}\}_{m'\in M'}$ yield the same (Busemann) boundary point in $\partial \Gamma_w$. The same argument works for any pair of obstructions in the directions $d_i$ and $d_{i+2 \mod 8}$. The proof of this claim works as in Proposition \ref{propdiagonale}.
\end{remark}

In the next proposition, we investigate the case where the graph $\Gamma_w$ has infinite growth, without obstruction, in some direction $d_i$, with $i=1,3,5,7$. It turns out that, in this situation, there exist both Busemann and non-Busemann boundary points.

\begin{prop}\label{propinfbus}
Let $\Gamma_w$ have infinite growth but no obstruction in the direction $d_i$, for some $i\in\{1,3,5,7\}$. Then there exist countably many Busemann points $\{\xi_k^{(i)}\}_{k\in S'}$ and countably many non-Busemann points $\{\eta_k^{(i)}\}_{k\in S''}$ in $\partial \Gamma_w$, where $S'$ and $S''$ are left (or right, or bi)-infinite subsets of $\mathbb{Z}$.
\end{prop}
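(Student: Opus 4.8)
The plan is to adapt the machinery already developed for the obstruction case (Propositions \ref{propnonbusemann}--\ref{propbus} and Corollary \ref{infinitinonbus}) to the more delicate situation of infinite growth without obstruction. Suppose, without loss of generality, that $\Gamma_w$ has infinite growth but no obstruction in the direction $d_7$, so that the vertex set contains an unbounded subsequence $\{h_k\mathbf{e}_1\}_{k}$ of the positive horizontal axis, but only finitely many of the $h_k$ lie on a side $s^m_7$ of a hole isomorphic to $H_m$. The key structural observation to establish first is that, because there is no obstruction, for each $r>0$ the holes that the graph encounters along the positive $\mathbf{e}_1$-axis beyond distance $r$ are of bounded level, hence bounded size; consequently the positive horizontal axis is eventually a genuine geodesic ray in $\Gamma_w$ (after finitely many detours around small holes). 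This gives one distinguished geodesic ray along $d_7$, and by translating its ``$y$-offset'' by an integer $k$ we obtain a $\mathbb{Z}$-indexed family of rays $\gamma^{(k)}$, exactly as in the proof of Corollary \ref{infinitinonbus}; here $S'$ and $S''$ will be the subsets of $\mathbb{Z}$ of admissible offsets (left-, right-, or bi-infinite depending on whether the growth is ``one-sided against a wall'' or genuinely two-sided, which is why we cannot assert $S'=S''=\mathbb{Z}$ in general).

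The second step is to split these rays into the Busemann and non-Busemann families. For an offset $k$ such that the ray $\gamma^{(k)}$ can be realized by an \emph{honest} geodesic ray in $\Gamma_w$ (i.e. the detours around the bounded holes it meets are consistent and do not force a doubling-back), the resulting boundary point $\xi^{(k)}_i$ is Busemann: one checks, as in Proposition \ref{propbc}, that for every vertex $v$ and all $m\ge n$ large enough a geodesic from $v$ to $\gamma^{(k)}(n)$ extends to one reaching $\gamma^{(k)}(m)$, which is precisely the criterion (via \cite{webster}) for the limit to be a Busemann point. For the remaining offsets one uses the non-obstruction hypothesis in the opposite way: there are offsets $k$ for which every approximating sequence $\{v_m\}$ along that offset is forced, infinitely often, to pass a small hole on its \emph{far} side, so that the two minimal paths from a pair of separating vertices $\mathbf{v}_1(0,1)$, $\mathbf{v}_2(0,-1)$ to $v_m$ never intersect. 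Then the argument of Proposition \ref{propnonbusemann} applies verbatim: a Busemann point would yield a geodesic ray $\theta$ from $\mathbf{v}_1$ such that, for $m'<m$ large, $\theta(m')$ lies on geodesics from both $\mathbf{v}_1$ and $\mathbf{v}_2$ to $v_m$, contradicting the non-intersection. This produces the non-Busemann family $\{\eta^{(k)}_i\}_{k\in S''}$.

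The third step is to show that the boundary points so obtained are pairwise distinct and countable in number. Distinctness is the same bookkeeping as in Corollary \ref{infinitinonbus}: for $h\ne k$ one separates $\xi^{(h)}_i$ from $\xi^{(k)}_i$ (and likewise for the $\eta$'s, and across the two families) by evaluating the horofunctions at the vertex $v=(0,-k)$ or $(0,h)$ after the embedding into $\mathbb{Z}^2$, where the two values differ. Countability is immediate since the offsets range over subsets of $\mathbb{Z}$. Finally, each $\gamma^{(k)}$ (suitably reindexed on an unbounded $T\subseteq\mathbb{R}^+$) is a weakly-geodesic ray: the estimate $|d(\gamma^{(k)}(t),\gamma^{(k)}(0))-t|$ stays bounded because the total length of detours around the bounded holes met before distance $t$ grows sublinearly (in fact is eventually constant on long stretches), and the second inequality in part (3) of Definition \ref{defigeodesic} is checked exactly as in Lemma \ref{lemmadicembre} by writing $d(\gamma^{(k)}(t),y)$ as a sum of horizontal and vertical contributions in $\mathbb{Z}^2$.

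\textbf{Main obstacle.} The delicate point is controlling the geometry of the detours: without an obstruction the holes met along $d_i$ have bounded level, but they still perturb distances, and one must verify carefully that for ``most'' $y$-offsets $k$ the ray survives these perturbations as a true geodesic (giving a Busemann point) while for a cofinal-in-$\mathbb{Z}$ set of offsets the perturbation is unavoidable and one-sided in the sense needed to run the Proposition \ref{propnonbusemann} argument. Pinning down precisely which offsets fall into $S'$ versus $S''$, and why both sets are infinite (left-, right-, or bi-infinite according to the position of $\Gamma_w$'s large holes relative to the axis), is where the real work lies; the rest is a transcription of the already-established obstruction-case proofs.
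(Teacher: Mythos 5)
Your overall strategy is the same as the paper's: embed $\Gamma_w$ in $\mathbb{Z}^2$, consider the horizontal lines at integer offsets $k$, split the admissible offsets into those whose line is eventually a genuine geodesic ray (giving the Busemann points $\xi^{(i)}_k$) and those whose line meets infinitely many holes of bounded size (giving the non-Busemann points $\eta^{(i)}_k$ by running the argument of Proposition \ref{propnonbusemann} on far-side points of those holes), and then separate the resulting boundary points by evaluating horofunctions at suitable vertices. The paper does exactly this and, like you, it only asserts rather than proves that both families of offsets are infinite.

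However, your first step contains a claim that is both false and inconsistent with the rest of your own argument. The hypothesis of no obstruction in the direction $d_7$ does \emph{not} say that only finitely many of the points $h_k\mathbf{e}_1$ lie on sides of holes, nor does it imply that ``the positive horizontal axis is eventually a genuine geodesic ray in $\Gamma_w$ (after finitely many detours around small holes).'' It only rules out a sequence of such incidences with hole levels tending to infinity, i.e.\ it gives \emph{boundedness} of the sizes of the holes met by any fixed horizontal line; such a line may still cross infinitely many of these bounded holes, and it is precisely the offsets for which this happens that produce the non-Busemann points. If your claim were correct for every offset, then $S''$ would be empty and there would be no non-Busemann points in the direction $d_i$, contradicting the statement being proved. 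Your second step quietly replaces this with the correct dichotomy (ray eventually hole-free versus ray crossing infinitely many bounded holes), so the structure survives, but the first step should be rewritten accordingly. A second, smaller point: the separating vertices for the non-Busemann argument cannot be taken as $(0,1)$ and $(0,-1)$ in general; as in the paper's proof they must be chosen just above and just below the vertical extent of the bounded-size holes crossed at the given offset $k$, that is, with second coordinate larger than $c_2(t)$ and smaller than $b_2(t)$ respectively.
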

\begin{proof}
We can suppose, without loss of generality, that $i=7$. The other cases are analogous. Consider the usual embedding of $\Gamma_w$ into $\mathbb{Z}^2$. Since the graph $\Gamma_w$ is infinite in the direction $d_7$, but it has no obstruction in such direction, the size of the holes intersected by the sequence $\{\textbf{v}+m\textbf{e}_1\}_{m\in \mathbb{N}}$ is bounded, for every $\textbf{v}\in \mathbb{Z}^2$. Now let ${\bf v}_k(0,k)$ be a vertex of $\Gamma_w$. Observe that $k$ varies in a set $S$, where $S=\mathbb{Z}$ if $\Gamma_w$ has infinite growth also in the directions $d_1$ and $d_5$, whereas $S$ is a left-infinite (or right-infinite) subset of $\mathbb{Z}$ if $\Gamma_w$ has infinite growth only in one direction between $d_1$ and $d_5$. Notice that $S$ can be partitioned into two infinite subsets $S'$ and $S''$ such that the geodesic ray consisting of the vertices $\{\textbf{v}_k+m\textbf{e}_1\}_{m\in \mathbb{N}}$ is contained in $\Gamma_w$, for each $k\in S'$, and the sequence $\{\textbf{v}_k+m\textbf{e}_1\}_{m\in \mathbb{N}}$ is not eventually contained in $\Gamma_w$, for each $k\in S''$.

In the first case, the geodesic ray gives rise to Busemann boundary points. It is a standard argument to show that there exists $v\in \Gamma_w$ such that $\lim_m \varphi_v(\textbf{v}_k+m\textbf{e}_1)\neq \lim_m \varphi_v(\textbf{v}_{k'}+m\textbf{e}_1)$ for $k\neq k'$, so that we get all distinct boundary points.

In the second case, the sequence of vertices  intersects an infinite sequence of holes whose size is bounded. Let $s$ be the maximal size of a hole in such a sequence, and let $\{H_t\}_{t\in \mathbb{N}}$ be the subsequence consisting of all the holes of size $s$, so that any side of each $H_t$ has length $3^{s-2}$. Let $A_t(a_1(t), a_2(t))$,  $B_t(b_1(t), b_2(t))$, $C_t(c_1(t), c_2(t))$, $D_t(d_1(t), d_2(t))$ denote, as usual, the corner vertices of such holes, for every $t\in \mathbb{N}$, and define the vertex $z_t(b_1(t), b_2(t) + h)$, with $0\leq h\leq 3^{s-2}$.

Notice that in this case there exist two vertices $\textbf{u}(u_1,u_2)$ and $\textbf{u}'(u_1',u_2')$ such that the shortest paths from $\textbf{u}$ to $z_t$ and from $\textbf{u}'$ to $z_t$ do not intersect (to do this, it suffices to choose $u_2 > c_2(t)$ and $u_2'< b_2(t)$).

We can now proceed as in the proof of Proposition \ref{propnonbusemann} and deduce that the limit point of the sequence $\{z_t\}_{t\in \mathbb{N}}$ is not Busemann. Finally, it is a standard argument to show that the non-Busemann points obtained in this second case are all distinct.
\end{proof}

\begin{cor}\label{remarknoequiv}
Let $\Gamma_w$ be a graph with an obstruction in the direction $d_i$, and no obstruction in the direction $d_{i+2}$ (resp. $d_{i-2}$). Then the boundary points $\zeta_k^{(i)}$, $\xi_h^{(i+2)}$, $\eta_h^{(i+2)}$, and $\beta_{i,i+2}$ (resp. $\zeta_k^{(i)}$, $\xi_h^{(i-2)}$, $\eta_h^{(i-2)}$, and $\beta_{i-2,i}$) are distinct, for every choice of $k$ and $h$.
\end{cor}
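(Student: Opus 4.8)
The plan is to distinguish the four types of boundary points by evaluating suitable horofunctions $\varphi_v$ (equivalently, the functions $\varphi_v(x)=d(x,w)-d(x,v)$) on the defining sequences and showing the resulting limits disagree in an appropriate way; this is exactly the technique already used in Proposition \ref{propbc}, Corollary \ref{propnonequiv}, and Corollary \ref{infinitinonbus}. Since all six pairwise comparisons reduce to the same kind of bookkeeping in $\mathbb{Z}^2$ via the standard embedding, I would treat the case of an obstruction in direction $d_7$ and no obstruction in direction $d_1$ (so we are comparing with $\xi_h^{(1)}$, $\eta_h^{(1)}$, $\beta_{7,1}$), the other case being symmetric.

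First I would record where each sequence lives in $\mathbb{Z}^2$: the antipodal sequence $\{z_m^{(k)}\}_{m\in M}$ defining $\zeta_k^{(7)}$ consists of vertices $(c_1(m),\,z_2(m)+k)$ with $c_1(m)\to+\infty$ and $z_2(m)-b_2(m)=c_2(m)-z_2(m)=3^{m-2}$ (from Lemma \ref{lemmauguaglianza}), so the second coordinate stays, for large $m$, in a \emph{bounded} window around $0$ after subtracting the main horizontal growth; in particular $z_2(m)/c_1(m)\to 0$. By contrast, the sequences defining $\xi_h^{(1)}$ and $\eta_h^{(1)}$ are built in Proposition \ref{propinfbus} from vertices of the form $\mathbf{v}_h+m\mathbf{e}_2$ (growth in direction $d_1$), i.e. their second coordinate tends to $+\infty$ while the first stays near a fixed value $u_1$; and $\beta_{7,1}$ comes from sequences $a_k\mathbf{e}_1+b_k\mathbf{e}_2$ with \emph{both} coordinates tending to $+\infty$. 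The key elementary observation, already exploited in Proposition \ref{propdiagonale} and Proposition \ref{propbc}, is that for a vertex $v=(v_1,v_2)$ and a target vertex whose coordinates are large enough (and which is connected to $v$ by a geodesic that is an honest $\mathbb{Z}^2$-geodesic, as happens for all these sequences once $m$ is large), one has $\varphi_v(\text{target})=v_1+v_2$ when the target lies "up and to the right", whereas for the antipodal sequence the geodesic from a vertex $v$ with $v_2>0$ reaches $z_m$ through $C_m$, giving $\varphi_v(z_m^{(k)})=v_1+z_2(m)+k-(\,v_2\text{-dependent term}\,)$, and through $B_m$ when $v_2<0$.

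Concretely: to separate $\zeta_k^{(7)}$ from $\xi_h^{(1)}$ and from $\eta_h^{(1)}$, choose $v=(0,-1)=:\mathbf{v}'$. Along $\{z_m^{(k)}\}$ the minimal path from $\mathbf{v}'$ passes through $B_m$, and a direct computation (identical to the one in Corollary \ref{propnonequiv}) gives $\varphi_{\mathbf{v}'}(z_m^{(k)})=k+1$ eventually, a finite value; along the $d_1$-sequences, whose vertices go off to $+\infty$ in the $d_1$-direction and stay near the $y$-axis, the path from $\mathbf{v}'$ goes straight up, so $\varphi_{\mathbf{v}'}(\cdot)\to u_1-1$ for the corresponding fixed horizontal coordinate $u_1$ of that sequence; adjusting $k$ away from the finitely many exceptional values this forces the limits to differ, hence the boundary points are distinct. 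To separate $\zeta_k^{(7)}$ from $\beta_{7,1}$, I would instead use a vertex $v$ with large positive second coordinate (as in Proposition \ref{propbc} one takes $v$ on the upper side of the relevant hole): along $\beta_{7,1}$ the value $\varphi_v$ stabilizes at $v_1+v_2$, while along the antipodal sequence the detour through $C_m$ produces a value that differs by a fixed nonzero amount (essentially because $z_2(m)=c_2(m)-3^{m-2}$, so the antipodal sequence is "$3^{m-2}$ below" the diagonal corner $C_m$, and this discrepancy is visible to any test vertex placed above it). Finally, $\xi_h^{(1)}\neq\eta_h^{(1)}$, $\xi_h^{(1)}\neq\beta_{7,1}$, and $\eta_h^{(1)}\neq\beta_{7,1}$ are already contained in Proposition \ref{propinfbus} (the $\xi$'s are Busemann, the $\eta$'s are not, and distinctness among the $\xi$'s, among the $\eta$'s, and of either family from the diagonal point is proved there by the same "standard argument" of finding a test vertex $v$ on which the horofunctions disagree). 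Collecting these comparisons yields the claim.

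The main obstacle is purely organizational rather than conceptual: one must keep careful track, for each of the six comparisons, of \emph{which} test vertex $v$ makes the two horofunction limits provably unequal, and of the (finitely many) values of the index $k$ or $h$ that have to be excluded because for them a coincidence of limits could a priori occur; once the right $v$ is chosen the computation collapses, via the $\mathbb{Z}^2$-embedding, to adding up horizontal and vertical segment lengths exactly as in Lemma \ref{lemmauguaglianza} and Proposition \ref{propbc}. I do not expect any genuinely new difficulty, since every ingredient—the antipodal sequence's position $3^{m-2}$ below $C_m$ and above $B_m$, the stabilization $\varphi_v=v_1+v_2$ along diagonal sequences, and the Busemann/non-Busemann dichotomy in the $d_1$-direction—has been established in the preceding results, and the corollary is essentially their assembly.
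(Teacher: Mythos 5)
The paper itself gives no proof of this corollary, so your task was to supply one; your overall strategy (separate the four families of boundary points by evaluating the horofunctions $\varphi_v$ on the defining sequences, choosing test vertices $v$ via the $\mathbb{Z}^2$-embedding) is exactly the technique the authors use everywhere else, and most of your assembly of Propositions \ref{propbc}, \ref{propinfbus}, \ref{propdiagonale} and Corollaries \ref{propnonequiv}, \ref{infinitinonbus} is sound. However, one step as you describe it would fail: to separate $\zeta_k^{(i)}$ from $\beta_{i,i+2}$ you propose a test vertex $v=(v_1,v_2)$ with large \emph{positive} second coordinate, claiming the offset $c_2(m)-z_2(m)=3^{m-2}$ is "visible to any test vertex placed above". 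It is not: for $v_2>0$ the geodesic from $v$ to $z_m^{(k)}$ passes through $C_m$, so the segment $d(C_m,z_m^{(k)})$ appears in both $d(w,z_m^{(k)})$ and $d(v,z_m^{(k)})$ and cancels, giving $\varphi_v(z_m^{(k)})=v_1+v_2$ for all $k\geq 0$ -- exactly the limit along the diagonal sequence. The discrepancy is visible only from \emph{below}: with $\mathbf{v}'=(0,-1)$ the geodesic to $z_m^{(k)}$ goes through $B_m$ and one gets $\varphi_{\mathbf{v}'}(z_m^{(k)})=1-k-|k|$ versus $\varphi_{\mathbf{v}'}(\beta_{7,1})=-1$, which is precisely the computation the paper already performs in Corollary \ref{propnonequiv} (and extends to general $k$ in Corollary \ref{infinitinonbus}). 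So this comparison should simply be cited from those results, or redone with a test vertex on the $B_m$-side.

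Two smaller issues: your stated values $\varphi_{\mathbf{v}'}(z_m^{(k)})=k+1$ and $\varphi_{\mathbf{v}'}(\mathbf{v}_h+m\mathbf{e}_2)\to u_1-1$ are both incorrect (the latter is $-1$ independently of $h$, since $d((h,m),w)=|h|+m$ and $d((h,m),(0,-1))=|h|+m+1$); and your phrase "adjusting $k$ away from the finitely many exceptional values" does not discharge the exceptional cases, since the corollary asserts distinctness for \emph{every} $k$ and $h$. Both are repairable: for the exceptional pairs one varies the test vertex, e.g.\ $v=(0,-N)$ gives $\varphi_v(z_m^{(k)})=N-k-|k|$ against $-N$ along the $d_{i+2}$-sequences, and these can agree for at most one value of $N$ per fixed $k$. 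With the test vertex for the $\beta_{i,i+2}$ comparison corrected and the exceptional indices handled this way, the proof goes through and coincides with what the paper intends.
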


Now we have all ingredients to state our main result, which is a classification theorem of the metric boundary of the graph $\Gamma_w$, for every $w\in Y\times X^\infty$.

\begin{thm}\label{teoremone}
For every $w \in Y\times X^\infty$, the boundary $\partial \Gamma_w$ consists of Busemann and non-Busemann points. More precisely, the following possibilities can occur.
\begin{enumerate}
  \item Suppose that there exists a constant $K>0$ such that $N_h\leq K$ for every $h\in \{1,3,5,7\}$, and let $\emptyset \neq I\subseteq \{0,2,4,6\}$ such that $N_i=+\infty$, for each $i\in I$. There are countably many Busemann points and countably many non-Busemann points for each of the infinite directions $d_j$, $j=i\pm 1$, $i\in I$. There is a unique Busemann point for every direction $d_{i-1,i+1}$.
      \item Let $\emptyset \neq I\subseteq \{1,3,5,7\}$ be the subset of indices such that $N_i=+\infty$, for every $i\in I$. Similarly, let $J\subseteq \{0,2,4,6\}$ be the subset of indices such that $N_j=+\infty$, for every $j\in J$. There are countably many non-Busemann points for every direction $d_i$, $i\in I$. There is a unique Busemann point for each of the directions $d_{i-2,i}$ and $d_{i,i+2}$. For any $i\in I$ and $j\in J$, there exist countably many Busemann points and countably many non-Busemann points for each of the infinite directions $d_{i-2}$, $d_{i+2}$, $d_{j-1}$ and $d_{j+1}$, provided that such indices $i-2, i+2, j-1, j+1$ are not in $I$ (the indices are considered without repetition).
\end{enumerate}
\end{thm}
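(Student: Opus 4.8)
The plan is to derive Theorem \ref{teoremone} by assembling the partial results of this subsection according to the parities of the indices $i$ with $N_i=+\infty$, and then to prove that the boundary points thereby produced exhaust $\partial\Gamma_w$. First I would use Proposition \ref{criteri_enne _i}, together with the converse observation (immediate from the recursive construction) that an obstruction in a direction $d_i$ can only occur when $N_i=+\infty$, to translate the hypotheses on the $N_i$'s into the geometric data of $\Gamma_w$: which of the axis directions $d_1,d_3,d_5,d_7$ have infinite growth, which of them carry an obstruction, and which diagonal directions $d_{i,i+2}$ have diagonal infinite growth. In Case (1) the boundedness of $N_h$ for odd $h$ guarantees that no axis direction carries an obstruction, while each $i\in I\subseteq\{0,2,4,6\}$ with $N_i=+\infty$ produces infinite growth in $d_{i\pm1}$ and diagonal infinite growth in $d_{i-1,i+1}$; in Case (2), exactly the directions $d_i$, $i\in I\subseteq\{1,3,5,7\}$, carry obstructions, and Remark \ref{13gennaio} pins down the accompanying diagonal and axis growth, the extra even indices in $J$ contributing further axis growth in the directions $d_{j\pm1}$.

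Next, for each relevant direction I invoke the appropriate proposition. For an obstruction direction $d_i$ I combine Lemma \ref{lemmauguaglianza}, Lemma \ref{lemmadicembre} and Proposition \ref{propnonbusemann} to produce the antipodal weakly-geodesic ray and its non-Busemann limit, Corollary \ref{infinitinonbus} to produce the countable family $\{\zeta_k^{(i)}\}_{k\in\mathbb{Z}}$ of non-Busemann points at bounded distance from it, and Proposition \ref{propbc}, Proposition \ref{propequivalenza} and Corollary \ref{propnonequiv} to show that the sequences $\{B_m\}$ and $\{C_m\}$ give two distinct Busemann points which, by Proposition \ref{propdiagonale} and Remark \ref{13gennaio}, are precisely the diagonal Busemann points $\beta_{i-2,i}$ and $\beta_{i,i+2}$. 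For an axis direction $d_j$ with infinite growth but no obstruction --- the directions $d_{i\pm1}$ in Case (1), and the directions among $d_{i\pm2}$, $d_{j\pm1}$ not lying in $I$ in Case (2) --- I apply Proposition \ref{propinfbus} to obtain the countable families of Busemann points $\{\xi_k^{(j)}\}$ and of non-Busemann points $\{\eta_k^{(j)}\}$. For a diagonal direction not bounded by an obstruction I apply Proposition \ref{propdiagonale} to get its single diagonal Busemann point, and Remark \ref{remarkdiagonale} handles the situation of two adjacent obstructions sharing a diagonal, ensuring that direction still contributes a unique point. Pairwise distinctness of all the listed points then follows from Corollary \ref{propnonequiv}, Corollary \ref{remarknoequiv}, Proposition \ref{propbc} and Remark \ref{remarkdiagonale}, and in any residual case by separating two candidate limits $u\neq u'$ through the elementary computation of $\varphi_v$ at suitable test vertices $v$ on the coordinate axes, exactly as in the proofs of Corollary \ref{infinitinonbus} and Proposition \ref{propbc}.

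The remaining, and genuinely harder, point is exhaustiveness: every $u\in\partial\Gamma_w$ occurs in the list. Since $(\Gamma_w,d)$ is proper and second countable, $u$ is the limit of a weakly-geodesic ray $\gamma$, and by the characterization recalled in Section \ref{sectionrieffel} it suffices to control the horofunction $g_u(x)=\lim_t (d(\gamma(t),x)-d(\gamma(t),w))$. The key geometric input is that, after the embedding into $\mathbb{Z}^2$ and because $\Gamma_w$ grows only in the directions singled out by Proposition \ref{criteri_enne _i}, the complement in $\Gamma_w$ of a sufficiently large ball $B_{\Gamma_w}(w,r)$ has only finitely many unbounded components, each contained in an axis strip or in a diagonal quadrant; hence $\gamma(t)$ is eventually trapped in one such component. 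If it is trapped in a diagonal quadrant, the fact that $\mathbb{Z}^2$-geodesics realize the graph distance there forces $g_u(x)=\pm x_1\pm x_2$ up to a constant, so $u$ is the diagonal Busemann point. If it is trapped in an axis strip, say in direction $d_7$, then $g_u$ is determined by the (bounded) transverse displacement of $\gamma$ together with the levels of the obstructing holes it must round: when $d_7$ carries no obstruction this yields one of the $\xi_k^{(7)}$ or $\eta_k^{(7)}$ as in Proposition \ref{propinfbus}, and when it does this yields $C$, $B$ or a $\zeta_k^{(7)}$ according to whether the transverse coordinate of $\gamma$ eventually drifts up, drifts down, or stays bounded relative to the antipodal sequence --- precisely the trichotomy underlying Proposition \ref{propequivalenza} and Corollary \ref{infinitinonbus}. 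Carrying out this case analysis carefully --- in particular, showing that $g_u$ really does depend only on this bounded combinatorial data and that no finer splitting can occur --- is the main obstacle; once it is in place, the statements in Cases (1) and (2) are just bookkeeping over the subsets $I$ and $J$.
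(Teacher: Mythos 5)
Your proposal follows essentially the same route as the paper: both proofs reduce the theorem to bookkeeping over the parities of the indices $i$ with $N_i=+\infty$, using Proposition \ref{criteri_enne _i} to translate the hypotheses into growth/obstruction data and then invoking Proposition \ref{propinfbus}, Proposition \ref{propdiagonale}, Corollary \ref{infinitinonbus} and Corollary \ref{remarknoequiv} direction by direction. The only substantive difference is that you also sketch an exhaustiveness argument (trapping a weakly-geodesic ray in an axis strip or a diagonal quadrant and reading off its horofunction), a step the paper's proof omits entirely, simply cataloguing the points produced by the preceding propositions; on that point your treatment is, if anything, more complete than the original.
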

\begin{proof}
(1) If $N_h$ is bounded for every $h\in\{1,3,5,7\}$, then the graph $\Gamma_w$ has no obstruction. In this case, we get countably many Busemann points $\xi_k^{(j)}$ and countably many non-Busemann points $\eta^{(j)}_k$, for $j=i\pm 1$ and $i\in I$, according with Proposition \ref{propinfbus}. Moreover, one has the diagonal Busemann point $\beta_{i-1,i+1}$ for every direction $d_{i-1,i+1}$, according with Proposition \ref{propdiagonale}.

(2) In this case, we have obstruction in the direction $d_i$, $i\in I$. This gives countably many non-Busemann points $\zeta_k^{(i)}$, according with Corollary \ref{infinitinonbus}. For every $i\in I$, one has two distinct diagonal Busemann points $\beta_{i-2,i}$ and $\beta_{i,i+2}$, according with Proposition \ref{propdiagonale}. Moreover, for every $i\in I$, one has countably many Busemann points $\xi_k^{(h)}$ and countably many non-Busemann points $\eta^{(h)}_k$, for $h\in \{i-2,i+2\}\setminus I$, according with Proposition \ref{propinfbus}. Finally, for every $j\in J$, one has countably many Busemann points $\xi_k^{(h)}$ and countably many non-Busemann points $\eta^{(h)}_k$,  for $h\in \{j-1,j+1\}\setminus I$, according with Proposition \ref{propinfbus}. Then the statement follows from Corollary \ref{remarknoequiv}.
\end{proof}

From Theorem \ref{teoclass}, Corollary \ref{corografi} and Theorem \ref{teoremone} we get the following corollary.

\begin{cor}\label{corosectioncomp}
\begin{enumerate}
\item There are uncountably many non-isomorphic graphs $\Gamma_w$ whose boundaries $\partial \Gamma_w$ are isomorphic.
\item For every $w\in Y\times X^\infty$, the boundary $\partial \Gamma_w$ contains countably many non-Busemann points.
\end{enumerate}
\end{cor}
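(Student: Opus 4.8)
The plan is to read off both statements from Theorem~\ref{teoremone}, which describes $\partial\Gamma_w$ completely in terms of the finite datum
\[
\iota(w)=\bigl(\{\,i\in\{1,3,5,7\}:N_i=+\infty\,\},\ \{\,i\in\{0,2,4,6\}:N_i=+\infty\,\}\bigr),
\]
combined with the count of isomorphism classes supplied by Theorem~\ref{teoclass} and Corollary~\ref{corografi}.

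For part~(1), I would first note that $\iota(w)$ takes only finitely many values ($2^4\cdot 2^4$ at most), and that, by Theorem~\ref{teoremone} together with the Propositions and Corollaries it assembles, the boundary $\partial\Gamma_w$ — as a space equipped with the partition into Busemann and non-Busemann points — depends only on $\iota(w)$: for a fixed value the theorem lists the same boundary points ($\xi_k^{(j)}$, $\eta_k^{(j)}$, $\zeta_k^{(i)}$, $\beta_{i,i+2}$, $\beta_{i-1,i+1}$) with the same incidences among them recorded in Corollary~\ref{remarknoequiv} and Remark~\ref{remarkdiagonale}. On the other hand, by Theorem~\ref{teoclass} an isomorphism $\Gamma_v\cong\Gamma_w$ forces the $X$-part of $v$ to be cofinal with $\sigma$ applied to the $X$-part of $w$ for one of the eight permutations $\sigma\in G$; since each cofinality class is countable, every isomorphism class contains only countably many of the words $w$, so there are uncountably many isomorphism classes (this is Corollary~\ref{corografi}). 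By the pigeonhole principle, some value of $\iota$ is realized by uncountably many pairwise non-isomorphic graphs $\Gamma_w$, all of which then have isomorphic boundaries. A concrete witness is the family of words $w=au$ with $u\in\{0,2\}^{\infty}$ in which both $0$ and $2$ occur infinitely often: this family is uncountable, all its members satisfy $N_1=N_3=N_5=N_7=0$ and $N_0=N_2=+\infty$, so they share the same boundary by case~(1) of Theorem~\ref{teoremone}, while Theorem~\ref{teoclass} splits the family into uncountably many isomorphism classes.

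For part~(2), every $w$ falls under case~(1) or case~(2) of Theorem~\ref{teoremone} because at least one $N_i$ is infinite; in case~(1) the non-Busemann points $\eta_k^{(j)}$ of Proposition~\ref{propinfbus} form a countably infinite subset of $\partial\Gamma_w$, and in case~(2) the non-Busemann points $\zeta_k^{(i)}$ of Corollary~\ref{infinitinonbus} do. Moreover the full list of boundary points in Theorem~\ref{teoremone} is a finite union of countable families, so $\partial\Gamma_w$ is countable; hence it contains exactly countably many non-Busemann points.

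I expect the only subtle point to be in part~(1): one must check that equality of $\iota(w)$ yields boundaries that are genuinely isomorphic — as topological spaces, or as Busemann-labelled sets — and not merely equinumerous, which is precisely where one needs the distinctness and equivalence statements packaged into Corollary~\ref{remarknoequiv} and Remark~\ref{remarkdiagonale} rather than only the raw point-counts. Everything after that is pigeonhole, and part~(2) is immediate.
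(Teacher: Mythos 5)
Your argument is correct and follows exactly the route the paper intends: the paper offers no written proof beyond the phrase ``From Theorem~\ref{teoclass}, Corollary~\ref{corografi} and Theorem~\ref{teoremone} we get the following corollary,'' and your pigeonhole argument on the finitely many values of $\iota(w)$ versus the uncountably many isomorphism classes, together with the observation that some $N_i$ is always infinite so one of the two cases of Theorem~\ref{teoremone} always applies, is precisely the natural filling-in of that one-line derivation. The concrete witness family $\{0,2\}^{\infty}$ and your caveat about what ``isomorphic boundaries'' should mean are reasonable additions, but the approach is the same as the paper's.
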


We want to investigate now the boundary compactification of the graphs $\Gamma_w$ from the measure theoretic point of view. In other words, we want to answer the following question: \lq\lq How does $\partial \Gamma_w$ look like generically?\rq\rq Recall that the measure space in this setting is the space of infinite sequences in $Y\times X^{\infty}$, endowed with the uniform Bernoulli measure $m$. From Theorem \ref{teoremone}, we can deduce the following corollary.

\begin{cor}\label{measurethm}
Let $\partial \Gamma_w$ be the metric boundary of the graph $\Gamma_w$, $w\in Y\times X^{\infty}$. Then with probability $1$, with respect to the uniform Bernoulli measure $m$ on $Y\times X^{\infty}$, the boundary $\partial \Gamma_w$ consists of four Busemann points and countably many non-Busemann points.
\end{cor}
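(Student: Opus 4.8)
The plan is to apply Theorem \ref{teoremone} after determining which of its two cases holds almost surely, and this reduces to a Borel--Cantelli / strong law computation for the random word $w=yx_1x_2\ldots$ drawn from the uniform Bernoulli measure $m$ on $Y\times X^\infty$. First I would observe that, under $m$, the letters $x_1,x_2,\ldots$ are i.i.d.\ uniform on $X=\{0,1,\ldots,7\}$, so for each fixed $i\in X$ the events $\{x_j=i\}$ are independent with probability $1/8$; since $\sum_j \tfrac18 = +\infty$, the second Borel--Cantelli lemma gives $N_i=+\infty$ almost surely, and hence (finite intersection of full-measure events) $N_i=+\infty$ for \emph{every} $i\in X$ simultaneously, with probability $1$. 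In particular, with probability $1$ there is no constant $K$ bounding $N_h$ for $h\in\{1,3,5,7\}$, so the situation of Theorem \ref{teoremone}(1) occurs on a null set only; almost surely we are in case (2) with $I=\{1,3,5,7\}$ and $J=\{0,2,4,6\}$.

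Next I would simply read off the boundary description from Theorem \ref{teoremone}(2) when $I=\{1,3,5,7\}$. For each $i\in I$ there are countably many non-Busemann points $\zeta_k^{(i)}$ coming from the obstruction in direction $d_i$ (Corollary \ref{infinitinonbus}); the union over the four such directions is still countable. For each $i\in I$ one has the two diagonal Busemann points $\beta_{i-2,i}$ and $\beta_{i,i+2}$ (Proposition \ref{propdiagonale}); as $i$ ranges over $\{1,3,5,7\}$ these are exactly the four points $\beta_{7,1},\beta_{1,3},\beta_{3,5},\beta_{5,7}$, each counted twice, so there are precisely four diagonal Busemann points. Finally, the extra families of Busemann and non-Busemann points $\xi_k^{(h)}$, $\eta_k^{(h)}$ attached in case (2) are indexed by $h\in(\{i-2,i+2\}\setminus I)$ for $i\in I$ and $h\in(\{j-1,j+1\}\setminus I)$ for $j\in J$; but when $I=\{1,3,5,7\}$ every index of the form $i\pm 2$ with $i$ odd is again odd, hence in $I$, and every index $j\pm1$ with $j$ even is again odd, hence in $I$. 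Therefore all these set differences are empty, no further Busemann or non-Busemann points arise, and Corollary \ref{remarknoequiv} guarantees the four diagonal Busemann points and the countably many $\zeta_k^{(i)}$ are pairwise distinct. This yields exactly four Busemann points and countably many non-Busemann points, as claimed.

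The only genuinely non-routine step is the very first one: justifying that the almost-sure event $\{N_i=+\infty\ \forall i\in X\}$ forces exactly case (2) of Theorem \ref{teoremone} with $I=\{1,3,5,7\}$ and \emph{nothing outside} the four diagonal Busemann points --- i.e.\ checking that the ``provided such indices are not in $I$'' proviso in Theorem \ref{teoremone}(2) kills all the $\xi,\eta$ families. Everything else is a direct invocation of the already-proved propositions together with the standard Borel--Cantelli argument; no new estimates on the geometry of $\Gamma_w$ are needed.
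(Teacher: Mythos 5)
Your proof is correct and follows essentially the same route as the paper: the paper's own proof simply states that the desired boundary structure holds exactly when $N_i=+\infty$ for all $i\in X$ (by Theorem \ref{teoremone}) and that this event has measure $1$ by ``a standard argument from elementary probability theory.'' You have merely filled in the two details the paper leaves implicit --- the Borel--Cantelli computation and the verification that case (2) of Theorem \ref{teoremone} with $I=\{1,3,5,7\}$ yields exactly the four diagonal Busemann points and nothing else.
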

\begin{proof}
The boundary $\partial \Gamma_w$ consists of four Busemann points and countably many non-Busemann points if and only if $w\in Y\times  W$, where $W$ is the set of words in $X^\infty$ with the property that $N_i=+\infty$ for every $i\in X$ (Theorem \ref{teoremone}). A standard argument from elementary probability theory shows that $m(Y\times W)=1$. Then the claim follows.
\end{proof}

\begin{example} \rm
Consider the infinite word $w=b7^\infty \in Y\times X^\infty$, so that we have $N_7 =\infty$, and $N_i =0$ for each $i\neq 7$. Then, according with Proposition \ref{criteri_enne _i}, the infinite graph $\Gamma_w$ has:
\begin{itemize}
\item infinite growth in the directions $d_1, d_5$, and $d_7$;
\item an obstruction in the direction $d_7$;
\item infinite diagonal growth in the direction $d_{7,1}$ and $d_{5,7}$.
\end{itemize}

Let $M= \mathbb{N}$. Then, for each $m\in M$, there exists in $\Gamma_w$ a hole on the right of the root $w$, isomorphic to $H_{m+1}$, whose vertices have coordinates:
$$
A_m\left(3^{m-1}-1,-\frac{3^{m-1}-1}{2}\right) \qquad     B_m\left(2\cdot 3^{m-1}-1,-\frac{3^{m-1}-1}{2}\right)
$$
$$
C_m\left(2\cdot 3^{m-1}-1,\frac{3^{m-1}+1}{2}\right) \qquad D_m\left(3^{m-1}-1,\frac{3^{m-1}+1}{2}\right).
$$
The sequence $\{z_m\}_{m\in M}$ of Lemma \ref{lemmauguaglianza} consists of the points $z_m \left(2\cdot 3^{m-1}-1, 1\right)$, for all $m\in M$. Observe that the length of each side of the hole is $3^{m-1}$. In particular, we have $d(w,z_m) = 3^m-1$. As examples of sequences $\{f_m\}_{m\in M}$ and $\{g_m\}_{m\in M}$ (see Proposition \ref{propequivalenza}) we can choose the vertices
$$
f_m\left(2\cdot 3^{m-1}-1, \frac{3^{m-2}+1}{2}\right)    \qquad      g_m\left(2\cdot 3^{m-1}-1, \frac{1-3^{m-2}}{2}\right),
$$
which represent points of the side $s^{m+1}_7$ of the hole isomorphic to $H_{m+1}$, sited at two thirds or one third of the total length of the side $s^{m+1}_7$, respectively. With these choices, it is easy to check that one has:
$$
f_2(m) - z_2(m) = z_2(m) - g_2(m) = \frac{1+3^{m-2}}{2} \to +\infty.
$$
The boundary consists of: countably many non-Busemann points $\zeta_k^{(7)}$, $\eta_k^{(1)}$, and $\eta_k^{(5)}$; two diagonal Busemann points $\beta_{5,7}$ and $\beta_{7,1}$; countably many Busemann points $\xi_k^{(1)}$ and $\xi_k^{(5)}$.

\begin{figure}[h]
\begin{center}
\psfrag{z3}{$z_3$}

\psfrag{z4}{$z_4$} \psfrag{B3}{$B_3$}
\psfrag{C3}{$C_3$}
\psfrag{B4}{$B_4$}
\psfrag{C4}{$C_4$}
\includegraphics[width=0.5\textwidth]{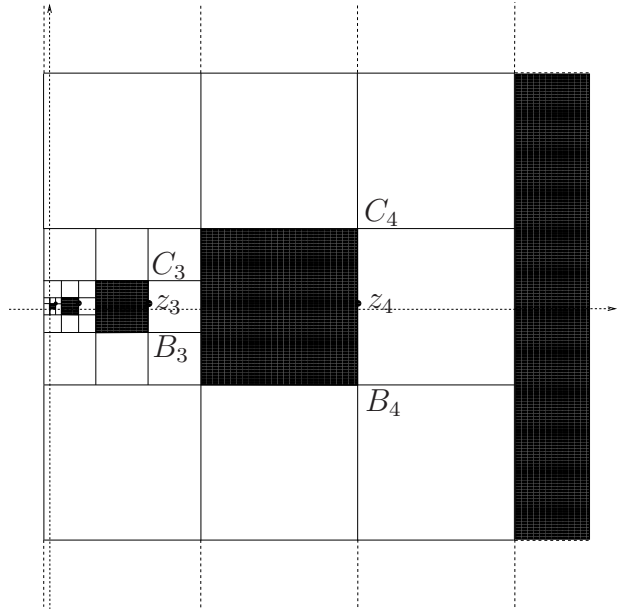} \end{center}  \caption{A finite part of the unrooted graph $\Gamma_{b7^\infty}$.}
\end{figure}
\end{example}

\section*{Acknowledgments}
Daniele D'Angeli was supported by Austrian Science Fund (FWF) P24028-N18.


\end{document}